\title[Creature Forcing and topological Ramsey spaces]{Creature Forcing and topological Ramsey spaces}
\author{Natasha Dobrinen} 
\address{University of Denver\\
Department of Mathematics, 2280 S Vine St, Denver, CO 80208, USA} 
\email{natasha.dobrinen@du.edu} 
  \urladdr{\url{http://web.cs.du.edu/~ndobrine}} 
\thanks{This research was partially done whilst the author was a visiting fellow at the Isaac Newton Institute for Mathematical Sciences in the programme `Mathematical, Foundational and Computational Aspects of the Higher Infinite' (HIF).  
Dobrinen gratefully acknowledges support from  the Isaac Newton Institute and from  National Science Foundation Grant DMS-1301665}
\subjclass[2010]{03E40, 03E02, 03E05, 05D10, 54H05}
\newcommand{\om}{\omega}
\newcommand{\vp}{\varphi}
\newcommand{\sse}{\subseteq}
\newcommand{\contains}{\supseteq}
\newcommand{\Rone}{\mathcal{R}^{\tt tt}(K_1,\Sigma^*_1)}
\DeclareMathOperator{\PC}{PC}
\DeclareMathOperator{\depth}{depth}
\DeclareMathOperator{\dom}{dom}
\DeclareMathOperator{\val}{\mathbf{val}}
\DeclareMathOperator{\dis}{\mathbf{dis}}
\DeclareMathOperator{\nor}{\mathbf{nor}}
\DeclareMathOperator{\bfH}{\mathbf{H}}
\DeclareMathOperator{\pos}{\mathrm{pos}}
\newcommand{\dn}{\mathrm{dn}}
\newcommand{\up}{\mathrm{up}}
\newcommand{\rgl}{\rangle}
\newcommand{\lgl}{\langle}
\newcommand{\re}{\restriction}
\newcommand{\lre}{\upharpoonleft}
\newcommand{\bN}{\mathbb{N}}
\newcommand{\ra}{\rightarrow}
\newtheorem{thm}{Theorem}  
\newtheorem{prop}[thm]{Proposition} 
\newtheorem{lem}[thm]{Lemma} 
\newtheorem{cor}[thm]{Corollary} 
\newtheorem{claim}[thm]{Claim}
\theoremstyle{definition}   
\newtheorem{defn}[thm]{Definition}
\theoremstyle{remark} 
\newtheorem{rem}{Remark} 
\newtheorem*{ack}{Acknowledgments}
\newcommand{\noprint}[1]{\relax}
\begin{document}

\maketitle

\dedicatory{Celebrating Alan Dow and his tours de force in set theory and topology}

\begin{abstract}
This article  introduces  a line  of  investigation into 
connections between creature forcings and topological Ramsey spaces.
Three examples of  sets of 
pure candidates for
creature forcings  are shown to 
contain dense subsets which are actually topological Ramsey spaces.
A new variant of the product tree Ramsey theorem is proved in order to 
obtain the 
 pigeonhole principles  for two of these examples.   
\end{abstract}


\section{Introduction}\label{sec.intro}

Connections between partition theorems and creature forcings have been known for some time.
Partition theorems are used to establish various norm functions and to deduce forcing properties, for instance, properness.
Conversely, creature forcings can give rise to new partition theorems, as  seen, for instance,   in \cite{Roslanowski/Shelah13}.
Todorcevic  pointed out to the author in 2008 that   there are strong connections between creature forcings and topological Ramsey spaces  deserving of  a systematic investigation.
The purpose of this note is to open up  this line of research and provide some tools for future investigations.

In \cite{Roslanowski/Shelah13}, Ros{\l}anowski and Shelah 
proved partition theorems for several broad classes of creature forcings.
Their partition theorems  have the following form:
Given  a  creature forcing and letting 
 $\mathcal{F}_{\mathbf{H}}$ denote the related
countable set of finitary functions,
for any  partition of $\mathcal{F}_{\mathbf{H}}$ into finitely many pieces 
there is a pure candidate for which all  finitary functions obtainable from it (the {\em possibilities} on the all creatures obtained from the pure candidate)  reside in one piece of the partition.
Their proofs proceed  in a similar vein to Glazer's 
proof of Hindman's Theorem:
Using the subcomposition function on pure candidates, they define an associative binary operation which gives rise to a semi-group on the set of creatures. 
Then they prove the existence of idempotent ultrafilters for this semi-group.
As a consequence, they obtain the partition theorems mentioned above.
In particular, assuming  the Continuum Hypothesis, there is an ultrafilter on $\mathcal{F}_{\mathbf{H}}$ which is generated by pure candidates, analogously to  ultrafilters on base set $[\om]^{<\om}$ generated by infinite block sequences using Hindman's Theorem.

In this article, we look at 
three specific examples of creature forcings from \cite{Roslanowski/Shelah13}
and  construct dense subsets of the 
collections of pure candidates  which we prove form topological Ramsey spaces; 
that is, these dense subsets satisfy the Abstract Ellentuck Theorem:  In the related exponential topology, every subset which has the property of Baire is Ramsey.
As a corollary, we  recover Ros{\l}anowski and Shelah's
partition theorems for these particular examples.

Showing that 
the Axiom \bf A.4 \rm (pigeonhole) holds for these forcings  is  quite related to, but in general not the same as, the partition theorems in \cite{Roslanowski/Shelah13}.
However, for two of these examples,  showing that there are dense subsets forming a topological Ramsey space is actually stronger, and   the related 
partition theorems in \cite{Roslanowski/Shelah13} are recovered.
For these two examples, the pigeonhole principle relies on  a Ramsey theorem
for  unbounded finite products of finite sets, where exactly one of the sets in the product can be replaced with the collection of its $k$-sized subsets.
This is proved in Theorem \ref{thm.newandimportant} in  Section \ref{sec.PTRT},
 building  on work of  Di Prisco, Llopis and Todorcevic  in   \cite{DiPrisco/Llopis/Todorcevic04}.
The method of proof for  Theorem \ref{thm.newandimportant}   lends itself to generalizations, setting the stage for future work  regarding  more types of creature forcings, as well as possible density versions of Theorem \ref{thm.newandimportant} and variants in the vein of   \cite{Todorcevic/Tyros13}, in which  Todorcevic and Tyros proved the density version of Theorem \ref{thm.3.21}.
In Section \ref{sec.tRs}, we 
show that Examples 2.10, 2.11, and 2.13 in \cite{Roslanowski/Shelah13}
have dense subsets forming topological Ramsey spaces.
Theorem \ref{thm.newandimportant} is applied to 
prove the Axiom \bf A.4 \rm for 
 Examples 2.10 and 2.11;
the Hales-Jewett Theorem is used to prove the Axiom \bf A.4 \rm for Example 2.13.

The motivation for this line of investigation is several-fold.
When a forcing has a dense set forming a topological Ramsey space, it  makes available   Ramsey-theoretic techniques
 aiding  investigations of  the properties of the generic extensions and the 
  related generic
 ultrafilter.
In particular, it makes investigations of forcing over $L(\mathbb{R})$ reasonable, as all subsets of the space in $L(\mathbb{R})$ are Ramsey.
Further, 
by work of Di Prisco, Mijares, and Nieto in \cite{DiPrisco/Mijares/Nieto15}, in the presence of a supercompact cardinal, the generic ultrafilter forced by a topological Ramsey space, partially ordered by almost reduction, has complete combinatorics in over $L(\mathbb{R})$.
Having at one's disposal the Abstract Ellentuck Theorem or the Abstract Nash-Williams Theorem 
aids in  proving canonical equivalence relations on fronts and barriers, in the vein of Pudl\'{a}k and R\"{o}dl \cite{Pudlak/Rodl82}.  
This in turn makes possible  investigations of initial Rudin-Keisler and Tukey structures below these generic ultrafilters in the line of  \cite{Raghavan/Todorcevic12},
 \cite{Dobrinen/Todorcevic14}, 
\cite{Dobrinen/Todorcevic15},
\cite{Dobrinen/Mijares/Trujillo14},
\cite{DobrinenJSL15},
and \cite{DobrinenJML16}.

For the sake of space,  we only include 
 in Section
\ref{sec.reviewtRs}
the basics of topological Ramsey spaces needed to understand the present work
and refer the reader to Todorcevic's book \cite{TodorcevicBK10} for a more thorough background. 
Likewise, we do not attempt to adequately present background material on creature forcing.
However, we include throughout this paper  references to 
Ros{\l}anowski and Shelah's book \cite{Roslanowski/ShelahBK} and their paper \cite{Roslanowski/Shelah13} 
so that the interested reader can pursue further this line of research.

\begin{ack}
The author thanks S.\ Todorcevic  for suggesting in 2008  that 
 connections between  topological Ramsey space theory  and creature forcing  is  deserving of   in-depth study.
The author extends many thanks to the referee for  thorough and detailed readings of the paper and for pertinent  comments and suggestions.  
\end{ack}

On a personal note, I  would like to thank Alan Dow for his inspiring and encouraging  influence on my early and present mathematics.  He and his work are truly exceptional.
Happy Birthday, Alan!


\section{Basics of   topological Ramsey spaces}\label{sec.reviewtRs}

A brief review of topological Ramsey spaces is provided in this section for the reader's convenience.
Building on seminal  work of Carlson and Simpson in \cite{Carlson/Simpson90}, Todorcevic distilled  key properties of the Ellentuck space into  four axioms, \bf A.1  \rm -  \bf A.4\rm, which guarantee that a space is a topological Ramsey space.
(For further background, the reader  is referred to Chapter 5 of \cite{TodorcevicBK10}.)
The  axioms \bf A.1  \rm -  \bf A.4 \rm
are defined for triples
$(\mathcal{R},\le,r)$
of objects with the following properties:
$\mathcal{R}$ is a nonempty set,
$\le$ is a quasi-ordering on $\mathcal{R}$,
 and $r:\mathcal{R}\times\om\ra\mathcal{AR}$ is a  surjective map producing the sequence $(r_n(\cdot)=r(\cdot,n))$ of  restriction maps, where
$\mathcal{AR}$ is  the collection of all finite approximations to members of $\mathcal{R}$.
For $u\in\mathcal{AR}$ and $X,Y\in\mathcal{R}$,
\begin{equation}
[u,X]=\{Y\in\mathcal{R}:Y\le X\mathrm{\ and\ }(\exists n)\ r_n(Y)=u\}.
\end{equation}

For each $n<\om$, $\mathcal{AR}_n=\{r_n(X):X\in\mathcal{R}\}$.
\vskip.1in

\begin{enumerate}
\item[\bf A.1]\rm
\begin{enumerate}
\item[(1)]
$r_0(X)=\emptyset$ for all $X\in\mathcal{R}$.\vskip.05in
\item[(2)]
$X\ne Y$ implies $r_n(X)\ne r_n(Y)$ for some $n$.\vskip.05in
\item[(3)]
$r_m(X)=r_n(Y)$ implies $m=n$ and $r_k(X)=r_k(X)$ for all $k<n$.\vskip.1in
\end{enumerate}
\end{enumerate}
According to \bf A.1 \rm (3) for each $u\in\mathcal{AR}$ there is exactly one $n$ for which there exists an $X\in\mathcal{R}$ satisfying $u=r_n(X)$.
This $n$ is called the {\em length} of $u$ and  we
write $|u|=n$.
We use the abbreviation $[n,X]$ to denote $[r_n(X),X]$.
For $u,v\in\mathcal{AR}$, we write $u\sqsubseteq v$ if and only if $(\exists X\in\mathcal{R})(\exists m\le n\in \om)(u=r_m(X)\wedge v=r_n(X))$.
We write $u\sqsubset v$ if and only if $u\sqsubseteq v$ and $u\ne v$.

\begin{enumerate}
\item[\bf A.2]\rm
There is a quasi-ordering $\le_{\mathrm{fin}}$ on $\mathcal{AR}$ such that\vskip.05in
\begin{enumerate}
\item[(1)]
$\{v\in\mathcal{AR}:v\le_{\mathrm{fin}} u\}$ is finite for all $u\in\mathcal{AR}$,\vskip.05in
\item[(2)]
$Y\le X$ iff $(\forall n)(\exists m)\ r_n(Y)\le_{\mathrm{fin}} r_m(X)$,\vskip.05in
\item[(3)]
$\forall u,v,y\in\mathcal{AR}[y\sqsubset v\wedge v\le_{\mathrm{fin}} u\ra\exists x\sqsubset u\ (y\le_{\mathrm{fin}} x)]$.\vskip.1in
\end{enumerate}
\end{enumerate}

The number $\depth_X(u)$ is the least $n$, if it exists, such that 
$u\le_{\mathrm{fin}}r_n(X)$.
If such an $n$ does not exist, then we write $\depth_X(u)=\infty$.
If $\depth_X(u)=n<\infty$, then $[\depth_X(u),X]$ denotes $[r_n(X),X]$.

\begin{enumerate}
\item[\bf A.3] \rm
\begin{enumerate}
\item[(1)]
If $\depth_X(u)<\infty$ then $[u,Y]\ne\emptyset$ for all $Y\in[\depth_X(u),X]$.\vskip.05in
\item[(2)]
$Y\le X$ and $[u,Y]\ne\emptyset$ imply that there is $Y'\in[\depth_X(u),X]$ such that $\emptyset\ne[u,Y']\sse[u,Y]$.\vskip.1in
\end{enumerate}
\end{enumerate}
Additionally, 
for $n>|u|$, let  $r_n[u,X]$  denote the collection
$\{r_n(Y):Y\in [u,X]\}$.
\begin{enumerate}
\item[\bf A.4]\rm
If $\depth_X(u)<\infty$ and if $\mathcal{O}\sse\mathcal{AR}_{|u|+1}$,
then there is $Y\in[\depth_X(u),X]$ such that
$r_{|u|+1}[u,Y]\sse\mathcal{O}$ or $r_{|u|+1}[u,Y]\sse\mathcal{O}^c$.\vskip.1in
\end{enumerate}

The  {\em Ellentuck topology} on $\mathcal{R}$ is the topology generated by the basic open sets
$[u,X]$;
it refines the  metric topology on $\mathcal{R}$,  considered as a subspace of the Tychonoff cube $\mathcal{AR}^{\bN}$.
Given the Ellentuck topology on $\mathcal{R}$,
the notions of nowhere dense, and hence of meager are defined in the usual way.
We  say that a subset $\mathcal{X}$ of $\mathcal{R}$ has the {\em property of Baire} iff $\mathcal{X}=\mathcal{O}\cap\mathcal{M}$ for some Ellentuck open set $\mathcal{O}\sse\mathcal{R}$ and Ellentuck meager set $\mathcal{M}\sse\mathcal{R}$.

\begin{defn}[\cite{TodorcevicBK10}]\label{defn.5.2}
A subset $\mathcal{X}$ of $\mathcal{R}$ is {\em Ramsey} if for every $\emptyset\ne[u,X]$,
there is a $Y\in[u,X]$ such that $[u,Y]\sse\mathcal{X}$ or $[u,Y]\cap\mathcal{X}=\emptyset$.
$\mathcal{X}\sse\mathcal{R}$ is {\em Ramsey null} if for every $\emptyset\ne [u,X]$, there is a $Y\in[u,X]$ such that $[u,Y]\cap\mathcal{X}=\emptyset$.

A triple $(\mathcal{R},\le,r)$ is a {\em topological Ramsey space} if every subset of $\mathcal{R}$  with the property of Baire  is Ramsey and if every meager subset of $\mathcal{R}$ is Ramsey null.
\end{defn}

The following result can be found as Theorem
5.4 in \cite{TodorcevicBK10}.

\begin{thm}[Abstract Ellentuck Theorem]\label{thm.AET}\rm \it
If $(\mathcal{R},\le,r)$ is closed (as a subspace of $\mathcal{AR}^{\bN}$) and satisfies axioms {\bf A.1}, {\bf A.2}, {\bf A.3}, and {\bf A.4},
then every  subset of $\mathcal{R}$ with the property of Baire is Ramsey,
and every meager subset is Ramsey null;
in other words,
the triple $(\mathcal{R},\le,r)$ forms a topological Ramsey space.
\end{thm}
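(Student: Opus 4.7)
The plan is to follow the classical Galvin--Prikry--Ellentuck template, adapted to the abstract setting, using a combinatorial forcing relation together with a fusion argument. The main engine will be Axiom \textbf{A.4}, which lets us diagonalize finite-dimensional partitions one level at a time; Axioms \textbf{A.1}--\textbf{A.3} guarantee that the tree of approximations $\mathcal{AR}$ behaves coherently enough to glue these single-level choices into a global member of $\mathcal{R}$.

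First, fix a family $\mathcal{F}\sse\mathcal{AR}$ and introduce a combinatorial forcing: say that $X\in\mathcal{R}$ \emph{accepts} $u\in\mathcal{AR}$ (with $\depth_X(u)<\infty$) if every $Y\in[u,X]$ has some $r_n(Y)\in\mathcal{F}$ with $n>|u|$, and \emph{rejects} $u$ if no $Y\in[u,X]$ has any such $r_n(Y)$ in $\mathcal{F}$, and is \emph{undecided} otherwise. The first lemma to prove is that for every $[u,X]$ there is $Y\in[\depth_X(u),X]$ that decides $u$; this is immediate by definition (if $X$ is undecided, pass to a witnessing $Y\in[u,X]$ that does not accept and observe that it must reject, using \textbf{A.3}(2) to stay inside the required neighborhood). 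The second, and crucial, lemma is that for every $[u,X]$ there is $Y\in[\depth_X(u),X]$ such that $Y$ decides every $v\in r_{|u|+1}[u,Y]$. This is where Axiom \textbf{A.4} enters: apply it with $\mathcal{O}$ equal to the set of $v\in\mathcal{AR}_{|u|+1}$ accepted by some fixed refinement, and combine with the first lemma to upgrade undecided to uniformly decided on the next level.

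The fusion step iterates this uniform decision lemma. Enumerate $\mathcal{AR}$ as $u_0,u_1,\dots$ compatibly with length; build a $\le$-decreasing sequence $X_0\ge X_1\ge\cdots$ such that at stage $k$, $X_{k+1}\in[\depth_{X_k}(u_k),X_k]$ witnesses the uniform decision lemma for $u_k$ (and, by \textbf{A.3}, keeps the finite approximation $r_{k}(X_{k+1})$ stable with $r_k(X_k)$). The closedness of $\mathcal{R}$ in $\mathcal{AR}^{\bN}$ then yields a fusion $Y\le X_k$ for all $k$, with the property that whether or not any $u\sqsubset Y'$ is accepted by $Y'$ depends only on $u$, not on $Y'\in[u,Y]$. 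Taking $\mathcal{F}=\{u\in\mathcal{AR}:[u]\cap\mathcal{X}\ne\emptyset\}$ for a given Ellentuck-open $\mathcal{X}$, one reads off that either $[u,Y]\sse\mathcal{X}$ or $[u,Y]\cap\mathcal{X}=\emptyset$, so Ellentuck-open sets are Ramsey. The same argument, run with the rejection side trivialized, shows that nowhere-dense sets are Ramsey null; closure under countable unions (again a fusion, enumerating the nowhere dense pieces) upgrades this to meager sets, and finally an arbitrary Baire set $\mathcal{X}=\mathcal{O}\triangle\mathcal{M}$ inherits the Ramsey property from its open part $\mathcal{O}$.

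The main obstacle, and the step that most requires care, is the uniform decision lemma on a whole level $r_{|u|+1}[u,Y]$: one needs to know that \textbf{A.4} can be applied while remaining inside $[\depth_X(u),X]$ and still obtain a refinement lying under a specified extension chosen by the combinatorial forcing. This is exactly what \textbf{A.3}(1),(2) are designed to support, but threading these together with the quasi-order $\le_{\mathrm{fin}}$ from \textbf{A.2} (to guarantee that the fusion sequence actually admits a $\le$-lower bound in the closed space $\mathcal{R}$) is the delicate bookkeeping that must be done at each stage. Once this is in place, the passage from open to meager to Baire is a standard category-theoretic wrap-up.
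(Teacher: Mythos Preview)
The paper does not prove this theorem at all; it is quoted as Theorem~5.4 of Todorcevic's book \cite{TodorcevicBK10} and used as a black box. So there is no proof in the paper to compare your proposal against.

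That said, your outline is the standard argument from that reference: combinatorial forcing (accept/reject) relative to a family $\mathcal{F}\sse\mathcal{AR}$, a one-level uniform decision lemma driven by \textbf{A.4}, a fusion along $[\depth,\cdot]$-refinements using \textbf{A.3} and closedness of $\mathcal{R}$, and then the passage open $\Rightarrow$ nowhere dense $\Rightarrow$ meager $\Rightarrow$ Baire. The architecture is correct.

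A few points in your sketch would need sharpening in an actual write-up. Your definition of ``rejects'' is not the usual one: rejection should mean that no $Y\in[\depth_X(u),X]$ \emph{accepts} $u$, not that no $Y$ has an approximation in $\mathcal{F}$; with your phrasing the accept/reject dichotomy fails. The fusion bookkeeping is also off: one does not enumerate all of $\mathcal{AR}$, but at stage $k$ handles only those $u$ with $\depth_{X_k}(u)=k$, choosing $X_{k+1}\in[k,X_k]$ so that \textbf{A.2} and closedness give a limit. Finally, the step from ``open sets are Ramsey'' to ``nowhere dense sets are Ramsey null'' is not merely ``the rejection side trivialized''; one must show that if the fused $Y$ rejects $u$ then $[u,Y]$ actually misses the open set, which requires an additional argument (in Todorcevic's presentation, that if $Y$ rejects $u$ then $Y$ rejects every one-step extension of $u$ in $r_{|u|+1}[u,Y]$, again via \textbf{A.4}). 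None of these gaps is fatal to the strategy, but each is a genuine detail.
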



\section{A variant of the  product tree Ramsey theorem}\label{sec.PTRT}

The main theorem of this section, 
Theorem  \ref{thm.newandimportant}, is a Ramsey theorem on  
unbounded finite products of finite sets.  
This is a variant of Theorem \ref{thm.3.21}
below, with the strengthenings that 
 exactly one of the entries $K_l$  in each finite product is replaced with $[K_l]^k$ and   $l$ is allowed to vary over all numbers less than or equal to the length of the product, and the weakening  that some of the chosen subsets may have cardinality one.
It seems that a full strengthening
of Theorem \ref{thm.3.21} of the form  where   the  index of the $k$-sized subsets is allowed to vary over every index $l$ may  not be possible
(see  Remark \ref{rem.theorem}).
The conclusion of Theorem  \ref{thm.newandimportant} is what is needed to 
 prove  Axiom \bf A.4 \rm for two of the examples of  forcing with pure candidates  in the next section;
 it is the essence of the pigeonhole principle for $r_k[k-1,\bar{t}\,]$, for $\bar{t}$ in a particular  dense subset of the creature forcing.
The hypothesis  in Theorem
\ref{thm.newandimportant} that the sizes of the $K_j$ grow as $j$ increases lends itself to our intended applications.

Throughout,  for $l\le n$, $[K_l]^k\times \prod_{j\in (n+1)\setminus\{l\}}K_j$ is  
used to denote 
$$
K_0\times\dots\times K_{l-1}\times [K_l]^k\times K_{l+1}\times\dots\times K_n.
$$

\begin{thm}\label{thm.newandimportant}
Given $k\ge 1$, a sequence of positive integers $(m_0,m_1,\dots)$,
sets $K_j$, $j<\om$, such that $|K_j|\ge j+1$,
and a coloring 
$$
c:\bigcup_{n<\om}\bigcup_{l\le n}([K_l]^k\times \prod_{j\in (n+1)\setminus\{l\}}K_j)\ra 2,
$$
there are infinite sets $L,N\sse \om$ such that,
enumerating $L$ and $N$ in increasing order, $l_0\le n_0<l_1\le n_1<\dots$,
 and there are subsets $H_j\sse K_j$, $j<\om$,
such that 
$|H_{l_i}|=m_i$
for each $i<\om$, 
 $|H_j|=1$ for each $j\in \om\setminus L$,
and $c$ is constant on
$$
\bigcup_{n\in N}
\bigcup_{l\in L\cap (n+1)}
([H_l]^k\times\prod_{j\in (n+1)\setminus\{l\}}H_j).
$$
\end{thm}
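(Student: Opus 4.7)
The plan is a recursive fusion construction combining the finite Ramsey theorem for $k$-subsets with a concluding Ramsey-for-pairs pigeonhole that produces a uniform color. For clarity we describe the case $n_i = l_i$ (so $N = L$); the general case is handled analogously by inserting singleton positions at indices in $(l_i, n_i]$. The construction also uses \emph{cushion sizes} $m'_i \geq \max\{m_0, \ldots, m_i\}$ in place of $m_i$, which will allow a final thinning step to realign sizes.

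At stage $i$, given $l_0 < \cdots < l_{i-1}$ and $H_j$ for $j \leq l_{i-1}$ with the appropriate sizes, we choose $l_i > l_{i-1}$ large enough that $|K_{l_i}|$ exceeds the implicit finite Ramsey bound (possible since $|K_j| \geq j+1$ is unbounded), fix arbitrary singletons $H_j$ for $l_{i-1} < j < l_i$, and select $H_{l_i} \subseteq K_{l_i}$ of size $m'_i$. The selection iterates the finite Ramsey theorem for $k$-subsets over the finitely many tuples in $\prod_{r < i} H_{l_r}$, and then iterates pigeonhole over the finitely many pairs $(r, (s, \bar{t}'))$ with $r < i$ and $(s, \bar{t}') \in [H_{l_r}]^k \times \prod_{j \leq l_{i-1},\ j \neq l_r} H_j$, so as to stabilize the $2$-coloring $h \mapsto c(s, \bar{t}', \text{singletons}, h)$ on $H_{l_i}$. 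This arranges that for each $r \leq i$ the slice $[H_{l_r}]^k \times \prod_{j \leq l_i,\ j \neq l_r} H_j$ becomes constant as $h$ varies in $H_{l_i}$; constancy in the earlier coordinates is propagated by a strengthened inductive invariant that records, at each stage, the color of each current slice together with its behaviour under any single-coordinate future extension, so that no new choice can break an old slice.

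After the recursion, each slice $(l_r, l_s)$ is monochromatic with some color $\varepsilon_{r, s} \in \{0, 1\}$. The main obstacle, and the most delicate part, is equating all these colors. We apply the infinite Ramsey theorem for pairs to the $2$-coloring $\{r, s\} \mapsto \varepsilon_{r, s}$ of $[\omega]^2$ to extract an infinite $I \subseteq \omega$ on which the color is constant, say $\varepsilon$. We then restrict $L$ to $\{l_i : i \in I\}$, relabel the resulting set in increasing order as $l'_0 < l'_1 < \cdots$, and for each $t$ thin $H_{l'_t}$ from its cushion size $m'_{i_t}$ down to exactly $m_t$ (possible because $m'_{i_t} \geq m_t$ by choice of the cushion); we also thin each $H_{l_i}$ for $i \notin I$ down to a singleton, while leaving the already-singleton $H_j$ for $j \notin L$ untouched. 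Since thinning preserves monochromaticity on subslices, the final configuration realizes the conclusion with the prescribed sizes $(m_t)_{t < \omega}$ and with uniform color $\varepsilon$ on the entire union.
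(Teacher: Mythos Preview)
There is a genuine gap at the point where you assert that ``each slice $(l_r,l_s)$ is monochromatic with some color $\varepsilon_{r,s}$.'' Your construction does not achieve this. Step~(b) at stage $s$ makes the level-$l_s$ color independent of the new coordinate $h\in H_{l_s}$, but it says nothing about dependence on the earlier coordinates $h_j\in H_{l_j}$ for $j<s$, $j\ne r$, nor on the $k$-set $s'\in[H_{l_r}]^k$; and step~(b) at earlier stages concerned \emph{earlier} levels, not level $l_s$. Step~(a) at stage $i$ only makes the level-$l_i$ color constant on $[H_{l_i}]^k$ \emph{for each fixed tuple} $\bar t\in\prod_{r<i}H_{l_r}$, and does not eliminate the dependence on $\bar t$. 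So after the recursion, the color of a point in the $(l_r,l_s)$ slice may still depend on every coordinate in $\prod_{j<s,\,j\ne r}H_{l_j}$ and on the choice of $k$-set in $[H_{l_r}]^k$; hence $\varepsilon_{r,s}$ is not well defined and the Ramsey-for-pairs step has nothing to act on. The appeal to ``a strengthened inductive invariant that records\ldots behaviour under any single-coordinate future extension'' cannot rescue this: to propagate constancy to level $l_s$ one would either have to re-shrink the already-fixed $H_{l_r}$ at each later stage (its size would then have to survive infinitely many Ramsey rounds, which no finite cushion allows), or at stage $r$ anticipate colorings at all future levels $l_s$, which are not yet determined.

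The paper's proof confronts exactly this difficulty and resolves it with a much heavier tool than Ramsey for pairs. During the recursion it only secures, for each fixed tuple $\bar x\in\prod_{i\le p}H_{l_i}$, that the color is constant on the tail with the $k$-set at $l_p$; the sets $H_{l_i}$ are kept of size $R(m_0,\dots,m_i)$, the bound from the Di~Prisco--Llopis--Todorcevic product tree Ramsey theorem (Theorem~\ref{thm.3.21}). At the end, the induced coloring $\bar x\mapsto c'(\bar x)$ on $\bigcup_p\prod_{i\le p}H_{l_i}$ is homogenized by a single application of Theorem~\ref{thm.3.21}, which simultaneously thins each $H_{l_i}$ down to size $m_i$ and selects infinitely many levels with a common color. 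Your cushions $m'_i\ge\max\{m_0,\dots,m_i\}$ are far too small for this; the honest replacement for your final Ramsey-for-pairs step is the full product tree theorem, and the cushions must be the $R(m_0,\dots,m_i)$ that it demands.
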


Theorem \ref{thm.newandimportant} is a variant of the following product tree Ramsey theorem, (Lemma 2.2 in \cite{DiPrisco/Llopis/Todorcevic04} and 
Theorem 3.21 in \cite{TodorcevicBK10}), 
which we now state since it
 will be used in the proof of Theorem \ref{thm.newandimportant}.
Let $\mathbb{N}^+$ denote the set of positive integers.

\begin{thm}[Di Prisco-Llopis-Todorcevic, \cite{DiPrisco/Llopis/Todorcevic04}]\label{thm.3.21}
There is an $R:[\mathbb{N}^+]^{<\om}\ra\mathbb{N}^+$ such that for every infinite sequence $(m_j)_{j<\om}$ of positive integers and for every coloring 
$$
c:\bigcup_{n<\om}\prod_{j\le n} R(m_0,\dots,m_j)\ra 2,
$$
there exist $H_j\sse R(m_0,\dots,m_j)$, $|H_j|=m_j$, for $j<\om$, such that $c$ is constant on the product 
$$
\prod_{j\le n} H_j
$$
for infinitely many $n<\om$.
\end{thm}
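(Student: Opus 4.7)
The plan is to reduce Theorem~\ref{thm.newandimportant} to Theorem~\ref{thm.3.21} by an auxiliary product-tree construction that absorbs the variable $k$-subset position, followed by a finite Ramsey extraction for $k$-subsets inside each selected block. First I would use the growth condition $|K_j|\ge j+1$ to pre-select a sparse sequence of candidate $L$-positions $p_0<p_1<\cdots$ with $|K_{p_i}|\ge R(q_0,\dots,q_i)$, where $R$ is the function from Theorem~\ref{thm.3.21} and $q_i$ is chosen (via the classical finite Ramsey theorem on $k$-tuples) so that any $2$-coloring of $[X]^k$ with $|X|\ge q_i$ has a monochromatic $m_i$-subset. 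For each $j\notin\{p_i:i<\om\}$, provisionally fix an arbitrary singleton $a_j\in K_j$; these provisional singletons reduce the variable $k$-subset position $l$ in the target coloring to ranging only over the candidate indices $\{p_0,\dots,p_n\}$ for each length $n+1$.

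Next, for each candidate index $i$, define a binary auxiliary coloring $\tilde c_i$ on $\bigcup_{n\ge i}\prod_{i'\le n}K_{p_{i'}}$ by $\tilde c_i(x_0,\dots,x_n)=c(\bar t)$, where $\bar t$ is the completed length-$(p_n+1)$ tuple built from the singletons $a_j$ at non-candidate positions, the element $x_{i'}$ at position $p_{i'}$ for $i'\ne i$, and a canonical $k$-subset at position $p_i$ formed from $x_i$ together with $k-1$ fixed auxiliary markers in $K_{p_i}$. I would apply Theorem~\ref{thm.3.21} diagonally, once per $i$, each time working on the tail of the candidate sequence that has not yet been stabilized. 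This yields sets $\bar H_i\subseteq K_{p_i}$ of size $q_i$, an infinite set $\tilde N$ of good lengths, and a color $\epsilon_i\in 2$ per $i$ on which the corresponding $\tilde c_i$ is constant. A pigeonhole on $(\epsilon_i)_{i<\om}$ then stabilizes the color to a single value $\epsilon\in 2$ on an infinite sub-selection of candidate indices.

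Within each surviving $\bar H_i$, apply the classical Ramsey theorem on $[\bar H_i]^k\to 2$ to extract $H_{l_i}\subseteq\bar H_i$ of size $m_i$ such that $c$ is constantly $\epsilon$ on every tuple of length $n+1$, $n\in\tilde N$, whose $k$-subset sits at position $p_i$ and is drawn from $H_{l_i}$, with the other $L$-coordinates drawn from the $H_{l_{i'}}$ and non-$L$-coordinates equal to the singletons $a_j$. The integer $q_i$ was cooked precisely to make this extraction possible. Relabeling the retained candidates in increasing order as $l_0<l_1<\cdots$, picking $n_i\in\tilde N\cap[l_i,l_{i+1})$, and setting $H_j=\{a_j\}$ for $j\notin L$ yields the required $L$, $N$, and $H_j$.

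The main obstacle is that Theorem~\ref{thm.3.21} stabilizes only a single binary coloring at a time, whereas the variable $k$-subset position $l$ naturally generates one such coloring per candidate index $i$. The workaround is the diagonal sequence of applications described above, where the quantifier order is essential: fix candidates and singletons first so that each $\tilde c_i$ is a well-defined binary coloring, then stabilize them one position at a time (always working on the tail so as not to disturb previously-fixed sets), and only at the end perform the $k$-subset Ramsey extraction inside each selected block. The growth $|K_j|\ge j+1$ is used precisely to push candidate positions far enough out to meet the Ramsey thresholds of all future stages.
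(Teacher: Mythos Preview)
Your proposal is not a proof of the stated theorem. Theorem~\ref{thm.3.21} is the Di~Prisco--Llopis--Todorcevic product tree Ramsey theorem itself; the paper does not prove it but quotes it from~\cite{DiPrisco/Llopis/Todorcevic04} as a black box. What you have written is an outline toward Theorem~\ref{thm.newandimportant} that \emph{uses} Theorem~\ref{thm.3.21} as an ingredient. So you have addressed the wrong target, and there is in any case no proof in the paper to compare against.

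Even read as a sketch of Theorem~\ref{thm.newandimportant}, the argument has a genuine gap at the final extraction step. Your auxiliary coloring $\tilde c_i$ records the value of $c$ only on the single ``canonical'' $k$-subset $\{x_i\}\cup\{\text{markers}\}$ at position $p_i$. Stabilizing $\tilde c_i$ therefore says nothing about $c$ on an arbitrary $k$-subset of $\bar H_i$. When you then write ``apply the classical Ramsey theorem on $[\bar H_i]^k\to 2$'', there is no well-defined $2$-coloring to apply it to: the value of $c$ on a $k$-subset at position $p_i$ still depends on the choices at the other $L$-positions $p_{i'}$, and those sets $\bar H_{i'}$ are not singletons. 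Stabilizing $\tilde c_{i'}$ for $i'\ne i$ does not help, since $\tilde c_{i'}$ concerns configurations with the $k$-subset at $p_{i'}$, not at $p_i$. The paper avoids this by first proving an infinite variant of Theorem~\ref{thm.3.21} with the $k$-subset slot held fixed at the leftmost position (Theorem~\ref{thm.genprodtree} via Lemmas~\ref{lem.3.20.Ex1.10tight} and~\ref{lem.lem.3.18.Ex1.10}), then packaging it as Corollary~\ref{cor. applform}, and only afterwards running the diagonal fusion over positions, with one closing application of Theorem~\ref{thm.3.21} to align the colors across positions.
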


The proof of Theorem \ref{thm.newandimportant}
closely follows the line of proof of Theorem \ref{thm.3.21} as presented in \cite{TodorcevicBK10}.
It will follow from 
Corollary \ref{cor. applform} 
(proved via  Lemmas \ref{lem.3.20.Ex1.10tight} and \ref{lem.lem.3.18.Ex1.10} and 
Theorem \ref{thm.genprodtree})
along with a final application of Theorem \ref{thm.3.21}.
The following lemma and its proof are minor modifications  of
Lemma 2.1 in \cite{DiPrisco/Llopis/Todorcevic04}
(see also
  Lemma 3.20 in \cite{TodorcevicBK10}), the only difference being the use of $[H_0]^k$ in place of $H_0$.
We make the notational convention that for $n=0$, $[H_0]^k\times \prod_{j=1}^n H_j$ denotes $[H_0]^k$.

\begin{lem}\label{lem.3.20.Ex1.10tight}
For any given  $k\ge 1$ and  sequence $(m_j)_{j<\om}$ of positive integers,
there are numbers 
$S_k(m_0,\dots,m_j)$  such that for  any $n<\om$ and any coloring 
$$
c: [S_k(m_0)]^k\times\prod_{j=1}^n S_k(m_0,\dots,m_j)\ra 2,
$$
there are sets $H_j\sse S_k(m_0,\dots,m_j)$, $j\le n$, such that  
$|H_j|=m_j$ and $c$  is monochromatic on 
$[H_0]^k\times \prod_{j=1}^n H_j$.
\end{lem}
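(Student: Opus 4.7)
The plan is to mirror the proof of Lemma 3.20 in \cite{TodorcevicBK10} (equivalently, Lemma 2.1 of \cite{DiPrisco/Llopis/Todorcevic04}), modifying only the treatment of the first factor so that $H_0$ is replaced by $[H_0]^k$. The numbers $S_k(m_0,\dots,m_j)$ will be defined by recursion on $j$, independently of $n$, and the Ramsey conclusion will be proved by induction on $n$.

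First I would set $S_k(m_0)$ to be the classical Ramsey number guaranteeing that every $2$-coloring of $[S_k(m_0)]^k$ has a monochromatic $k$-subset of cardinality $m_0$. Then, assuming $S_k(m_0),\dots,S_k(m_0,\dots,m_{j-1})$ have already been fixed, I would choose $S_k(m_0,\dots,m_j)$ to be any integer at least
\[
m_j\cdot 2\cdot\binom{S_k(m_0)}{m_0}\cdot\prod_{i=1}^{j-1}\binom{S_k(m_0,\dots,m_i)}{m_i}.
\]
Each $S_k(m_0,\dots,m_j)$ thus depends only on its prefix, as required by the statement.

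The monochromatic conclusion is then proved by induction on $n$. The base case $n=0$ is immediate from the choice of $S_k(m_0)$. For the inductive step, given a $2$-coloring $c$ of $[S_k(m_0)]^k\times\prod_{j=1}^{n+1}S_k(m_0,\dots,m_j)$, I would fix each $x\in S_k(m_0,\dots,m_{n+1})$ and apply the inductive hypothesis to the slice coloring $c_x(A,y_1,\dots,y_n):=c(A,y_1,\dots,y_n,x)$, obtaining sets $H_0^x\sse S_k(m_0)$ and $H_j^x\sse S_k(m_0,\dots,m_j)$ for $1\le j\le n$ of the prescribed sizes, together with a color $\varepsilon_x\in\{0,1\}$, such that $c_x$ is constantly $\varepsilon_x$ on $[H_0^x]^k\times\prod_{j=1}^nH_j^x$. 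The number of possible tuples $(H_0^x,\dots,H_n^x,\varepsilon_x)$ is at most $2\cdot\binom{S_k(m_0)}{m_0}\cdot\prod_{i=1}^n\binom{S_k(m_0,\dots,m_i)}{m_i}$, so by the choice of $S_k(m_0,\dots,m_{n+1})$ and the pigeonhole principle, at least $m_{n+1}$ values of $x$ share a common tuple. These values form $H_{n+1}$, and together with the common $H_0,\dots,H_n$ and color they witness the conclusion for $n+1$.

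The hard part will be the bookkeeping: confirming that the recursive choice of $S_k(m_0,\dots,m_j)$ genuinely serves for every $n\ge j$ simultaneously, rather than only for the level at which it was introduced. This is handled transparently by the induction on $n$, since the pigeonhole step at level $n+1$ calls only on the inductive hypothesis at level $n$ and on the previously fixed prefix values of $S_k$. The presence of $[\,\cdot\,]^k$ in the first factor introduces no additional difficulty, since it is absorbed into the base case via classical Ramsey for $k$-element subsets, after which the slicing-and-pigeonhole step is formally identical to the one in the proof of the standard product tree Ramsey theorem.
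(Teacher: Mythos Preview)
Your argument is correct, but it runs the induction step in the opposite direction from the paper's proof. The paper first fixes each tuple $t$ in the front product $[S_k(m_0)]^k\times\prod_{j=1}^{n-1}S_k(m_0,\dots,m_j)$ and successively thins the \emph{last} coordinate so that every slice coloring $c_t(x)=c(t{}^{\frown}x)$ becomes constant (this is why the paper takes $S_k(m_0,\dots,m_n)=m_n\cdot 2^N$ with $N$ the cardinality of the front product); only then does it invoke the induction hypothesis on the shortened product. You instead fix each point $x$ in the last coordinate, apply the induction hypothesis to the front slice $c_x$, and pigeonhole on the finitely many possible outcome tuples $(H_0^x,\dots,H_n^x,\varepsilon_x)$. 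Both routes are standard ways to prove product Ramsey statements and both absorb the $[\,\cdot\,]^k$ modification entirely into the base case via the classical Ramsey number, exactly as you observe. Your bounds are a bit larger (products of binomial coefficients rather than $2$ to the product of set sizes), but this is irrelevant here since only existence of the $S_k(m_0,\dots,m_j)$ is claimed. Your closing remark that each $S_k(m_0,\dots,m_j)$ depends only on the prefix and serves uniformly for all $n\ge j$ is correct and is the point of the argument.
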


\begin{proof}
Let $S_k(m_0)$ be 
the least number $r$ such that  $r\ra(m_0)^k_2$.
This satisfies the lemma when $n=0$.
Now suppose that $n\ge 1$ and   the  numbers $S_k(m_0,\dots,m_j)$, $j<n$, have been obtained satisfying the lemma.
Let $N$ denote the number  $|[S_k(m_0)]^k|\cdot S_k(m_0,m_1) \cdots  S_k(m_0,\dots,m_{n-1})$,
and let  $S_k(m_0,\dots,m_n)=m_n\cdot 2^N$.
Given a coloring $c:[S_k(m_0)]^k\times\prod_{j=1}^n S_k(m_0,\dots,m_j)\ra 2$,
for each $t\in [S_k(m_0)]^k\times\prod_{j=1}^{n-1} S_k(m_0,\dots,m_j)$,
let $c_t$ denote the coloring on $ S_k(m_0,\dots,m_n)$ given by
$c_t(x)=c(t^{\frown}x)$, for $x\in S_k(m_0,\dots,m_n)$.
Let  $\lgl t_i:i<N\rgl$ be an 
enumeration of   the members of  $[S_k(m_0)]^k\times\prod_{j=1}^{n-1} S_k(m_0,\dots,m_j)$,
and let $K_0=S_k(m_0,\dots,m_n)$.
Given 
 $i<N$ and  $K_i$,
take $K_{i+1}\sse K_i$ of cardinality $m_n\cdot 2^{N-(i+1)}$ such that $c_{t_{i}}$ is constant on $\{{t_{i}}^{\frown}x:x\in K_{i+1}\}$.
By induction on $i<N$, we obtain $K_N\sse S_k(m_0,\dots,m_n)$ of size $m_n$ such that for each $i<N$, $c_{t_i}$ is constant on $K_{N}$.
Let $H_n=K_{N}$.
Now let $c'$ be the coloring on $[S_k(m_0)]^k\times\prod_{j=1}^{n-1}S_k(m_0,\dots,m_j)$ given by $c'(t)=c(t^{\frown} x)$, for any (every) $x\in H_n$.
By the induction hypothesis, there are $H_j\sse S_k(m_0,\dots,m_j)$ of cardinality $m_j$, $j<n$, such that $c'$ is constant on $[H_0]^k\times\prod_{j=1}^{n-1}H_j$.
Then $c$ is constant on $[H_0]^k\times\Pi_{j=1}^{n}H_j$.
\end{proof}

\begin{rem}
The case  $k=1$ is simply a re-statement of Lemma 2.1 in \cite{DiPrisco/Llopis/Todorcevic04}.  
If $m_0<k$, then the set $[S_k(m_0)]^k$ is the emptyset, so the whole product is empty and the lemma is vacuously true.
\end{rem}

\begin{rem}\label{rem.theorem}
If one wants a  generalization of  Theorem \ref{thm.3.21} where the placement of the $k$-sized subsets can range over all $l$,  it seems that only a finite version may be possible, as the bounds on the sizes of the sets needed to guarantee homogeneity depend both on $k$ and the number of products.
The proof of the following statement 
proceeds very similarly to the proof of Lemma  \ref{lem.3.20.Ex1.10tight}, with the difference that  one must consider $n$ different products instead of just one.

Given $k\ge 1$ and $n<\om$, there is a function $S_{k,n}:[\mathbb{N}^+]^{\le n}\ra \mathbb{N}^+$, depending on both $k$ and $n$, such that for each
 sequence $(m_j)_{j\le n}$ of positive integers,
for each coloring 
$$
c:\bigcup_{l\le n}  [S_{k,n}(m_0,\dots,m_l)]^k\times  
\prod_{j\in (n+1)\setminus\{l\}} S_{k,n}(m_0,\dots,m_j)\ra 2,
$$
there are subsets $H_j\sse S_{k,n}(m_0,\dots,m_j)$ such that
for each $l\le n$,
$|H_j|=m_j$ and 
$c$ is constant on 
$ [H_l]^k\times \prod_{j\in(n+1)\setminus\{l\}} H_j$.

As this theorem is not applied in this article and the proof takes up much room for notational reasons,  we merely note here the first few such numbers.
Let $r^1_k(m)$ denote the least number $r$ such that 
$r\ra(m)^k_2$, and let $r_k^{j+1}(m)$ denote the least number $r$ such that $r\ra (r_k^j(m))^k_2$.
For $n=0$,  $S_{k,0}(m_0)=r_k(m_0)$.
For $n=1$, the numbers $S_{k,1}(m_0)=2r_k(m_0)$ and 
$$
S_{k,1}(m_0,m_1)=r_k^{S_{k,1}(m_0)}(m_1)\cdot
2^{|[S_{k,1}(m_0)]^k|}.
$$
The point is that a general statement like this for infinite sequences  $(m_j)_{j<\om}$ would a priori seem the natural route to proving Theorem 
\ref{thm.newandimportant},
but as it only holds for finite sequences, we had to find a different means of proving the main theorem of this section.
\end{rem}

The following generalizes Theorem \ref{thm.3.21},  the first $R(m_0)$ being  replaced by $[R_k(m_0)]^k$, and provides 
 a step toward the proof  of Theorem \ref{thm.newandimportant}.
Its proof comes after Lemma \ref{lem.lem.3.18.Ex1.10}.

\begin{thm}\label{thm.genprodtree}
Given $k\ge 1$, there is a function $R_k:[\mathbb{N}^+]^{<\om}\ra \mathbb{N}^+$ such that for each
 sequence $(m_j)_{j<\om}$ of positive integers,
for each coloring 
$$
c:\bigcup_{n<\om}  [R_k(m_0)]^k\times  
\prod_{j=1}^n R_k(m_0,\dots,m_j)\ra 2,
$$
there are subsets $H_j\sse R_k(m_0,\dots,m_j)$ such that
$|H_j|=m_j$ and 
$c$ is constant on 
$$  [H_0]^k\times \prod_{j=1}^n H_j
$$
 for infinitely many $n$.
\end{thm}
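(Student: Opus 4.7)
The plan is to follow the template of Di Prisco-Llopis-Todorcevic's proof of Theorem \ref{thm.3.21}, as presented in Chapter 3 of Todorcevic's book, modified to accommodate the $[K_0]^k$ factor by invoking Lemma \ref{lem.3.20.Ex1.10tight} in place of the usual finite product Ramsey lemma. Two ingredients are combined: Lemma \ref{lem.3.20.Ex1.10tight} supplies a finite monochromatic sub-product at each single level, while Lemma \ref{lem.lem.3.18.Ex1.10} supplies the compactness that converts finite data into the required infinite sequence.

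First I would define $R_k$ recursively in terms of the numbers $S_k$ from Lemma \ref{lem.3.20.Ex1.10tight}, inflating each parameter: take $R_k(m_0,\dots,m_j)=S_k(\tilde m_0,\dots,\tilde m_j)$ with the auxiliary sequence $(\tilde m_i)$ growing rapidly enough to absorb the cumulative shrinkage incurred over the infinitely many stages of the construction.

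Given a coloring $c\colon\bigcup_{n<\omega}[R_k(m_0)]^k\times\prod_{j=1}^n R_k(m_0,\dots,m_j)\to 2$, I would iterate as follows. Choose an increasing sequence $n_0<n_1<\cdots$ and, at stage $i$, work inside the sub-product already committed at stage $i-1$. Apply Lemma \ref{lem.3.20.Ex1.10tight} to the restriction of $c$ to level $n_i$ within this sub-product to produce a further sub-product on which $c$ is constant at level $n_i$, with a preliminary pigeonhole step arranging that the constant value agrees across stages. The bookkeeping---namely, guaranteeing that after stage $i$ the surviving coordinates remain large enough to admit finite Ramsey at stage $i+1$---is packaged through Lemma \ref{lem.lem.3.18.Ex1.10}, which supplies the finitely branching, K\"onig-style extraction. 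The inflated growth of $\tilde m_i$ in the definition of $R_k$ is calibrated precisely so that this invariant is preserved at every stage. Reading off the construction yields $(H_j)_{j<\omega}$ with $|H_j|=m_j$ and infinitely many $n$ (the $n_i$'s) for which $c$ is constant on $[H_0]^k\times\prod_{j=1}^n H_j$.

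The main obstacle is that the colorings at distinct levels are treated independently by $c$, so monochromaticity at one level gives no information about sub-products at any other level. A naive diagonalization---apply Lemma \ref{lem.3.20.Ex1.10tight} separately at every level $n$ to produce $(H_0^{(n)},\dots,H_n^{(n)})$, then pigeonhole coordinate by coordinate to stabilize $H_0$, then $H_1$, and so on---can only pin down finitely many coordinates $H_0,\dots,H_J$ at a time, whereas the conclusion requires the \emph{entire} initial segment $(H_0,\dots,H_n)$ to be fixed simultaneously for the \emph{same} $n$ in an infinite set. Resolving this is exactly the purpose of Lemma \ref{lem.lem.3.18.Ex1.10}: together with the inflation built into the definition of $R_k$, it organizes the sub-product selections into a finitely branching tree whose every infinite branch comes equipped with monochromaticity at infinitely many levels, not just at one.
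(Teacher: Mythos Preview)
Your proposal names the right two lemmas but inverts the architecture and misreads the role of Lemma~\ref{lem.lem.3.18.Ex1.10}. In the paper, that lemma does all the work up front: it produces once and for all a set $N=\{n_0<n_1<\cdots\}$ with $M_o\xrightarrow{\ k\ }M_e$ for every $M\in[N]^\omega$, and then $R_k$ is defined \emph{from $N$} (not from the $S_k$'s) by $R_k(m_0,\dots,m_j)=n_{2(\sum_{i\le j}m_i)+1}$. For any target sequence $(m_j)$ this formula makes $(R_k(m_0,\dots,m_j))_j$ the odd-indexed part of some $P\subseteq N$ whose even-indexed part dominates $(m_j)$, and Lemma~\ref{lem.lem.3.18.Ex1.10} applied to $P$ is then literally the conclusion of the theorem --- no further iteration, and no direct appeal to Lemma~\ref{lem.3.20.Ex1.10tight} at this stage (that lemma is used only inside the omitted proof of Lemma~\ref{lem.lem.3.18.Ex1.10}).

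Your scheme of setting $R_k(m_0,\dots,m_j)=S_k(\tilde m_0,\dots,\tilde m_j)$ and then iterating Lemma~\ref{lem.3.20.Ex1.10tight} level by level has a real gap. Lemma~\ref{lem.lem.3.18.Ex1.10} is not a K\"onig-style packaging device one can bolt onto an independent choice of $R_k$; it is a statement about a particular set $N$, and nothing forces your $S_k$-based values to lie in $N$. More importantly, the iteration you describe cannot be sustained by any finite inflation: each application of Lemma~\ref{lem.3.20.Ex1.10tight} at level $n_i$ shrinks \emph{every} coordinate $0,\dots,n_i$, so coordinate $0$ is shrunk at every stage and no choice of $\tilde m_0<\omega$ survives infinitely many passes. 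The actual proof of Lemma~\ref{lem.lem.3.18.Ex1.10} (parallel to Lemma~3.18 in \cite{TodorcevicBK10}) sidesteps this by a descriptive-set-theoretic argument --- the Ramsey property of the set of $M$ for which the relation fails, then the Ramsey Uniformization Theorem to obtain a \emph{continuous} selector of witnessing bad colorings, and finally a contradiction against Lemma~\ref{lem.3.20.Ex1.10tight} exploiting that continuity --- not by a finitely branching tree.
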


The following Ramsey Uniformization Theorem, due to Todorcevic, appears (without proof) as Theorem 1.59 in \cite{TodorcevicBK10} and is essential to Lemma \ref{lem.lem.3.18.Ex1.10} below.
As previously no proof was available in the  literature and 
at the request of the referee,  the proof,   as communicated to the author by Todorcevic,  is included here, with notation slightly modified to cohere with this article.
In the following theorem and proof, the projective hierarchy refers to the metric topology on the Baire space, $[\om]^{\om}$.

\begin{thm}[Ramsey Uniformization Theorem, \cite{TodorcevicBK10}]
Suppose $X$ is a Polish space and $R$ is a coanalytic subset of the product $[\om]^{\om}\times X$ with the property that for all $M\in[\om]^{\om}$ there is $x\in X$ such that $R(M,x)$ holds.
Then there is an infinite subset $M$ of $\om$ and a continuous map $F:[M]^{\om}\ra X$ such that $R(N,F(N))$ holds for all $N\in[M]^{\om}$.
\end{thm}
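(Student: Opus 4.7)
The plan is to combine Kondo's $\Pi^1_1$-uniformization theorem with the Silver--Ellentuck Ramsey property of coanalytic subsets of $[\om]^\om$, in order to reduce a coanalytic uniformizer to a continuous one on a Ramsey-large cell. First, I would apply Kondo's theorem to the coanalytic relation $R$ to obtain a coanalytic selector $F_0\colon[\om]^\om\to X$ with $R(M,F_0(M))$ for every $M$; the hypothesis that $R(M,\cdot)$ is nonempty is exactly what makes $F_0$ total. It then suffices to find $M\in[\om]^\om$ on which $F_0\re[M]^\om$ is continuous and set $F:=F_0\re[M]^\om$.

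Fix a countable base $\{V_n:n<\om\}$ for $X$ closed under finite intersections. I would build $M=\{m_0<m_1<\cdots\}$ by a diagonal Ellentuck-style fusion so that for every pair $(s,n)$ with $s\in[M]^{<\om}$, the basic Ellentuck cell $[s,M]=\{N\in[M]^\om:s\sqsubseteq N\}$ is either entirely contained in $F_0^{-1}(V_n)$ or entirely disjoint from it. Such an $M$ automatically makes $F=F_0\re[M]^\om$ continuous, since for any $N\in[M]^\om$ and basic $V_n\ni F(N)$, a sufficiently long initial segment $s$ of $N$ places $N\in[s,M]\sse F_0^{-1}(V_n)$ and $[s,M]$ is a basic Ellentuck neighborhood of $N$. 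The fusion itself is standard: enumerate pairs $(s_k,n_k)$, and at stage $k$, given $m_0<\cdots<m_{k-1}$ and an infinite tail $M_k\sse\om\setminus(m_{k-1}+1)$, apply the Ramsey property to $[s_k,\{m_0,\ldots,m_{k-1}\}\cup M_k]$ to obtain $M_{k+1}\sse M_k$ witnessing homogeneity for $F_0^{-1}(V_{n_k})$, and pick $m_k:=\min M_{k+1}$; the diagonal $M:=\{m_k:k<\om\}$ then enjoys the required homogeneity for every pair $(s,n)$.

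The main obstacle lies precisely in the fusion step. The set $F_0^{-1}(V_n)$ is in general only $\Sigma^1_2$, being a projection of the coanalytic graph of $F_0$, so Silver's theorem for coanalytic Ramsey sets does not apply verbatim to $F_0^{-1}(V_n)$. The remedy is to exploit the additional structure of the Kondo selector, which picks out a $\Pi^1_1$-rank minimum in each fiber of $R$: using a $\Pi^1_1$-norm on $R$, membership of $N$ in $F_0^{-1}(V_n)$ can be recast as a coanalytic condition asserting that a distinguished pair $(N,x)$ with $x\in V_n$ realizes this rank minimum. The Silver--Ellentuck property of coanalytic sets, applied in the product space $[\om]^\om\times X$ and then projected to $[\om]^\om$, produces the required homogeneous tail at each stage. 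With this recasting in place, the remainder of the argument is a routine diagonal fusion and a straightforward verification of continuity.
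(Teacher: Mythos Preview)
Your overall architecture---apply Kond\^{o}'s $\Pi^1_1$-uniformization to obtain a coanalytic selector $F_0$, then run an Ellentuck fusion over a countable base of $X$ to force $F_0\re[M]^{\om}$ to be continuous---is exactly the paper's approach, and your fusion bookkeeping matches theirs. You also correctly pinpoint the crux: $F_0^{-1}(V_n)$ is in general only $\Sigma^1_2$, so Silver's theorem for analytic (equivalently, coanalytic) sets does not apply directly.

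The gap is in your proposed remedy. The paper does not try to lower the complexity of $F_0^{-1}(V_n)$; it simply invokes the Ramsey property of $\Sigma^1_2$ sets and runs the fusion on those. Your suggestion---work instead with the $\Pi^1_1$ set $\{(N,x)\in[\om]^{\om}\times X: x\in V_n$ and $(N,x)$ is rank-minimal in its fiber$\}$, apply Silver ``in the product,'' and then project to $[\om]^{\om}$---does not go through. The Ellentuck Ramsey property is a statement about subsets of $[\om]^{\om}$; there is no Silver--Ellentuck theorem for subsets of $[\om]^{\om}\times X$ to invoke. And projection is precisely the operation that raises $\Pi^1_1$ to $\Sigma^1_2$: the condition ``there exists $x\in V_n$ with $(N,x)$ rank-minimal'' \emph{is} the condition $F_0(N)\in V_n$, and that existential real quantifier cannot be eliminated by reshuffling the norm. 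So the $\Sigma^1_2$ Ramsey property is genuinely what is used here; the paper cites it as a known fact (it is not a ZFC theorem, but follows for instance from the existence of sharps for reals, and is treated as standard in the setting of Todorcevic's book).
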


\begin{proof}
Let $R$ be as in the hypothesis.
By the Kond\^{o} Uniformization Theorem \cite{Kondo38}, there is a coanalytic function $f\sse R$ which uniformizes $R$, meaning that $R(N,f(N))$ for each $N$ in the projection of  $R$ to $[\om]^{\om}$.
Since $X$ is Polish, there is a countable base for its topology, say $\lgl U_i:i<\om\rgl$.
Then for each $i<\om$, $f^{-1}(U_i)$ is $\Sigma^1_2$, since $f$ is $\Pi^1_1$.
Since $\Sigma^1_2$ sets  have the Ramsey property (with respect to the Ellentuck topology), the following fusion construction to obtain $M$ will complete the proof.

Since $f^{-1}(U_0)$ has the Ramsey property,
there is an $N_0\in[\om]^{\om}$ such that either 
 $[N_0]^{\om}\sse f^{-1}(U_0)$ or else
 $[N_0]^{\om}\cap f^{-1}(U_0)=\emptyset$.
Let $m_0=\min(N_0)$ and let $N_1'=N_0\setminus\{m_0\}$.
Suppose now
 $k\ge 1$ and we have chosen $N_0\contains\dots\contains N_{k-1}$,
such that letting $m_i=\min(N_i)$,  we have
 $m_0<\dots<m_{k-1}$
and for each $i<k$,
\begin{equation}\label{eq.eitheror}
 \forall s\sse\{m_j:j< i\}
 \ ( [s,N_i]\sse f^{-1}(U_i)\mathrm{\ or\ }
[s,N_i]\cap f^{-1}(U_i)=\emptyset).
\end{equation}
Let $N_k'$ denote  $N_{k-1}\setminus \{m_{k-1}\}$
and enumerate $\mathcal{P}(\{m_j:j<k\})$ as
$\lgl t_l:l<2^k\rgl$.
Since $f^{-1}(U_k)$ is Ramsey, there is an $N^0_k\in[t_0,N'_k]$ such that either
$[t_0,N^0_k]\sse f^{-1}(U_k)$ or else 
$[t_0,N^0_k]\cap f^{-1}(U_k)=\emptyset$.
For $l<2^k-1$,
having chosen $N^l_k$,
take $N^{l+1}_k\in[t_{l+1},N^l_k]$ such that 
either
$[t_{l+1},N^{l+1}_k]\sse f^{-1}(U_k)$ or else 
$[t_{l+1},N^{l+1}_k]\cap f^{-1}(U_k)=\emptyset$.
At the end of these $2^k$ many steps, take $N_k$ to be $N^{2^k-1}_k$, $m_k=\min(N_k)$,
and set $N_{k+1}'=N_k\setminus \{m_k\}$.
Note that  equation
(\ref{eq.eitheror})  now holds with $k$ substituted for $i$.

Let $M=\{m_k:k<\om\}$.
$M$ is infinite, since $m_k<m_{k+1}$, for all $k<\om$.
Let $F$ denote $f\re [M]^{\om}$.
To show that $F$ is continuous, it suffices to show that  for each $i<\om$,
$F^{-1}(U_i)$ is open in $[M]^{\om}$.
A standard and useful notation is to let $M/m_{i-1}$ denote the set of members of $M$ strictly greater than $m_{i-1}$.
Let $m_{-1}$ denote $-1$ so that $M_0/m_{-1}$ equals $M$.

\begin{claim}\label{claim.equality}
$F^{-1}(U_i)=\bigcup\{[s,M/m_{i-1}]:s\sse\{m_j:j< i\}$ and $[s,N_i]\sse f^{-1}(U_i)\}$.
\end{claim}

\begin{proof}
First note that $\{[s,M/m_{i-1}]:s\sse\{m_j:j< i\}\}$ partitions $[M]^{\om}$ into a  disjoint union of finitely many clopen sets, in the subspace topology on $[M]^{\om}$ inherited from $[\om]^{\om}$.
For each $s\sse\{m_j:j< i\}$, by  equation
(\ref{eq.eitheror}),
one of two cases holds:
If $[s,N_i]\sse f^{-1}(U_i)$,
then 
\begin{equation}
[s,M/m_{i-1}]= [s,N_i]\cap [M]^{\om}\sse f^{-1}(U_i)\cap [M]^{\om}=F^{-1}(U_i).
\end{equation}
If $[s,N_i]\cap f^{-1}(U_i)=\emptyset$,
then since $[s,M/m_{i-1}]\sse [s,N_i]$ and $F^{-1}(U_i)\sse f^{-1}(U_i)$,
it follows that $[s,M/m_{i-1}]\cap F^{-1}(U_i)=\emptyset$.
Thus, the Claim holds.
\end{proof}

Therefore, $F^{-1}(U_i)$ is a union of open sets in $[M]^{\om}$;
hence, $F$ is a continuous function from $[M]^{\om}$ into $X$.
\end{proof}

Given $k\ge 1$ and $M\in[\om]^{\om}$, letting $\{m_j:j<\om\}$ be the increasing enumeration of $M$, 
the notation 
 $M_o\xrightarrow{\ \ k} M_e$ means that for each 2-coloring $c:\bigcup_{n<\om}([m_1]^k\times 
\prod_{j=1}^n m_{2j+1})\ra 2$,
there are $H_j\sse m_{2j+1}$ such that $|H_j|=m_{2j}$ and $c$ is constant on $[H_0]^k\times \prod_{j=1}^n H_j$ for infinitely many $n$.
The following lemma  and its proof are almost identical with those of Lemma 3.18 in \cite{TodorcevicBK10}, the only changes being the substitution of
$\bigcup_{n<\om}( [\om]^k\times \om^{n-1})$
for the domain of the function  $c$  in place of $\om^{<\om}$, 
 the substitution of 
 $[\om]^k$ for one of the copies of $\om$, and  an application of 
Lemma \ref{lem.3.20.Ex1.10tight}
in place of the application of Lemma  3.20  in \cite{TodorcevicBK10}.
Thus, we omit its proof.

\begin{lem}\label{lem.lem.3.18.Ex1.10}
For each $k\ge 1$, there is an infinite subset $N\sse\om$ such that $M_o \xrightarrow{\ \ k} M_e$ for each $M\in[N]^{\om}$.
\end{lem}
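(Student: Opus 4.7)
The plan is to follow the blueprint of Todorcevic's proof of Lemma 3.18 in \cite{TodorcevicBK10}, adapted to the $[\om]^k$-slot. The two engines are the Ramsey Uniformization Theorem stated just above, which converts a ``for every $M$ some witness'' hypothesis into a \emph{continuous} selector on the Ellentuck cube $[N]^\om$ for some infinite $N$, and the finite pigeonhole Lemma \ref{lem.3.20.Ex1.10tight}, which provides homogeneous finite products of prescribed profile inside sufficiently large finite cubes. The overall scheme is contradiction-plus-uniformization: assume no such $N$ exists, uniformize to get a continuous assignment of ``bad'' colorings, and then use Lemma \ref{lem.3.20.Ex1.10tight} together with the continuity modulus of the selector to glue finite monochromatic products across levels, contradicting badness.

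Concretely, let $D=\bigcup_{n<\om}([\om]^k\times\om^n)$ and regard $X=2^D$ as the Polish space of $2$-colorings. Define $R\sse[\om]^\om\times X$ by declaring $R(M,c)$ to hold iff the relation $M_o\xrightarrow{\ k\ }M_e$ \emph{fails} for $c$, i.e.\ there is no sequence $(H_j)_{j<\om}$ with $H_j\sse m_{2j+1}$, $|H_j|=m_{2j}$, for which $c$ is constant on $[H_0]^k\times\prod_{j=1}^nH_j$ for infinitely many $n$. This relation is coanalytic. Suppose, toward a contradiction, that no $N$ as in the conclusion exists; then the Ellentuck-density of $\{M:\exists c\,R(M,c)\}$ lets us conclude that $R$ projects onto $[\om]^\om$. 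The Ramsey Uniformization Theorem then furnishes an infinite $M_0\sse\om$ and a continuous map $F:[M_0]^\om\to X$ such that for every $N\in[M_0]^\om$, the coloring $F(N)$ witnesses the failure of $N_o\xrightarrow{\ k\ }N_e$.

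The crux, and the main obstacle, is to derive a contradiction from $F$. The idea is to pass to a rapidly growing subset of $M_0$, whose rate of growth is tuned to both the continuity modulus of $F$ and the finite Ramsey numbers $S_k(m_0,\dots,m_j)$ supplied by Lemma \ref{lem.3.20.Ex1.10tight}. Using continuity one produces a sequence $N^{(t)}\in[M_0]^\om$ whose initial segments agree on longer and longer blocks, so that the colorings $F(N^{(t)})$ stabilize on larger and larger portions of $D$ to a single limiting coloring $c^*$. On each stabilized finite block, Lemma \ref{lem.3.20.Ex1.10tight} produces a finite homogeneous product of the profile demanded by the $N^{(t)}$-levels; diagonalizing through the $N^{(t)}$ one extracts a single $N\in[M_0]^\om$ and a single sequence $(H_j)$ witnessing $N_o\xrightarrow{\ k\ }N_e$ for $c^*$, while simultaneously $c^*=F(N)$ on the relevant portion. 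This contradicts the defining property of $F$. Making the diagonal precise—reconciling continuity, the growth of $N$, and the bounds $S_k(m_0,\dots,m_j)$—is the delicate step, but it is exactly the step carried out by Todorcevic for Lemma 3.18 and transfers verbatim once Lemma 3.20 of \cite{TodorcevicBK10} is replaced by its $[\om]^k$-variant Lemma \ref{lem.3.20.Ex1.10tight}.
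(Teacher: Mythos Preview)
Your proposal is correct and follows precisely the route the paper indicates: it explicitly adopts Todorcevic's proof of Lemma~3.18 in \cite{TodorcevicBK10}, substituting $\bigcup_{n<\om}([\om]^k\times\om^n)$ for $\om^{<\om}$ and invoking Lemma~\ref{lem.3.20.Ex1.10tight} in place of Lemma~3.20 there, which is exactly what the paper says to do (the paper omits the proof on these grounds). One minor imprecision: Ellentuck-density of $\{M:\exists c\,R(M,c)\}$ does not by itself yield that $R$ projects onto all of $[\om]^{\om}$; rather, that set is analytic and hence Ramsey, so density gives an $N_0$ with $[N_0]^{\om}$ contained in it, and uniformization is then applied over $[N_0]^{\om}$---but your subsequent sentence already lands on the correct $M_0$, so this is cosmetic.
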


The next proof proceeds by slight modification to the proof of  Theorem \ref{thm.3.21}, replacing $R(m_0)$ there with $[R_k(m_0)]^k$ and  replacing an instance of 
Lemma 3.18 in \cite{TodorcevicBK10} with Lemma \ref{lem.lem.3.18.Ex1.10}.

\vskip.1in

\noindent \it Proof of Theorem \ref{thm.genprodtree}. \rm
Pick an infinite subset $N=(n_p)_{p<\om}$ of positive integers enumerated in increasing order and satisfying  Lemma \ref{lem.lem.3.18.Ex1.10}.
For each $j<\om$, set
$$
R_k(m_0,\dots,m_j)=n_{2(\sum_{i=0}^j m_i)+1}.
$$
Then for every infinite sequence $(m_j)_{j<\om}$ of positive integers, 
if we let 
$$
P=\{
n_{2(\sum_{i=0}^j m_i)+\varepsilon}:j\in\om,\ \varepsilon<2\},
$$
then $P$ is an  infinite subset  of $N$ satisfying 
$P_o=(R_k(m_0,\dots,m_j))_{j<\om}$,
while the sequence $P_e$ pointwise dominates our given sequence $(m_j)_{j<\om}$.
By our choice of $N$, it follows that $P_o\xrightarrow{\ \ k}  P_e$.
$P_o$ supplies the infinitely many levels of $n$ satisfying the theorem.
\hfill $\square$

 The following corollary forms the basis of  the proof of Theorem \ref{thm.newandimportant} below.

\begin{cor}\label{cor. applform}
Let $L,N$ be  infinite subsets of $\om$ such that $l_0\le n_0<l_1\le n_1<\dots$.
Let  $k\ge 1$, $m_0\ge 1$,  and 
$K_j$, $j\ge l_0$, be nonempty sets 
 with $|K_{l_0}|=R_k(m_0)$,
 $|K_{l_i}|\ge i$ for each $i\ge 1$, 
and  $|K_j|=1$ for each $j\in(l_0,\om)\setminus L$.
Then for each coloring
$$
c:\bigcup_{n\in N} ([K_{l_0}]^k\times\prod_{j\in (l_0,n]} K_j)\ra 2,
$$
and each $r<\om$,
there are  infinite $L'\sse L$, $N'\sse N$  with $l'_0=l_0\le n'_0<l'_1\le n'_1<\dots$, and there are $H_j\sse K_j$ such that 
$|H_{l_0}|=m_0$,
$|H_{l'_i}|=r+i$ for each $i\ge 1$,
$|H_j|=1$ for each $j\in(l_0,\om)\setminus L'$,
and $c$ is constant on 
$$
\bigcup_{n\in N'} ([H_{l_0}]^k\times\prod_{j\in (l_0,n]} H_j).
$$
\end{cor}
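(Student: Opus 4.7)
The plan is to reduce the statement to Theorem \ref{thm.genprodtree} by identifying the coordinates outside a suitably thinned-out version of $L$ with fixed singletons, so that only the coloring along a product tree indexed by the thinned set is nontrivial.

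First, using $|K_{l_j}|\ge j$, I would thin $L$ to $\bar{L}=\{\bar{l}_0<\bar{l}_1<\cdots\}\subseteq L$ with $\bar{l}_0=l_0$ and $|K_{\bar{l}_i}|\ge R_k(m_0,r+1,\dots,r+i)$ for every $i\ge 1$. If $\bar{l}_i=l_{\phi(i)}$ for a strictly increasing $\phi:\om\to\om$ with $\phi(0)=0$, then $\bar{n}_i:=n_{\phi(i)}$ lies in $N$ and satisfies $\bar{l}_i\le\bar{n}_i<\bar{l}_{i+1}$. For each $j\in(l_0,\om)\setminus\bar{L}$ fix some $\kappa_j\in K_j$ (forced when $j\notin L$ since $|K_j|=1$ there). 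Then I would define an auxiliary coloring
\begin{equation*}
c':\bigcup_{i<\om}\bigl([K_{\bar{l}_0}]^k\times\prod_{h=1}^i K_{\bar{l}_h}\bigr)\to 2
\end{equation*}
by setting $c'(s,x_1,\dots,x_i)=c(y)$, where $y$ is the tuple with value $s$ at $l_0$, value $x_h$ at $\bar{l}_h$ for $1\le h\le i$, and value $\kappa_j$ at every other coordinate in $(l_0,\bar{n}_i]$.

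Next I would invoke Theorem \ref{thm.genprodtree} with the sequence $(m_0,r+1,r+2,\dots)$ to obtain $H'_0\subseteq K_{\bar{l}_0}$ and $H'_i\subseteq K_{\bar{l}_i}$ with $|H'_0|=m_0$ and $|H'_i|=r+i$, together with an infinite set of homogeneous levels for $c'$; a pigeonhole on the two possible colors followed by deletion of the element $0$ yields an infinite $I_1=\{i_0<i_1<\cdots\}$ with $i_0\ge 1$ on which $c'$ takes a single common value $\epsilon$. Setting $l'_0=l_0$, $l'_p=\bar{l}_{i_p}$ and $n'_p=\bar{n}_{i_p}$ for $p\ge 1$, and $n'_0=\bar{n}_{i_0}$, the required interleaving $l'_0\le n'_0<l'_1\le n'_1<\cdots$ follows at once from $\bar{l}_i\le\bar{n}_i<\bar{l}_{i+1}$ and $i_{p+1}>i_p$. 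I would then set $H_{l_0}=H'_0$; for $p\ge 1$ take $H_{l'_p}\subseteq H'_{i_p}$ of size exactly $r+p$ (possible since $i_p\ge p+1$); for each $\bar{l}_h$ with $h\notin\{0\}\cup\{i_p:p\ge 1\}$ pick a singleton $\{\xi_{\bar{l}_h}\}\subseteq H'_h$; and put $H_j=\{\kappa_j\}$ for $j\in(l_0,\om)\setminus\bar{L}$.

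The step I expect to be the main obstacle is ensuring homogeneity at the initial level $n'_0$. Because the required ordering forces $n'_0<l'_1$, no coordinate of $L'\setminus\{l_0\}$ lies strictly between $l_0$ and $n'_0$, so at this level $c$ collapses to $c'(s,\xi_{\bar{l}_1},\dots,\xi_{\bar{l}_{i_0}})$ with all auxiliary entries frozen to their chosen singletons inside the corresponding $H'_h$. This is precisely why I discard $0$ from $I_1$: with $i_0\ge 1$, level $i_0$ does belong to $I_1$, so the value equals $\epsilon$, whereas if $0$ were allowed the coloring $c'$ restricted to $[H'_0]^k$ would be uncontrolled. The same identification $c(s,t)=c'(s,x_1,\dots,x_{i_p})$ then transports homogeneity at every level $n'_p$, producing the constant value $\epsilon$ on $\bigcup_{n\in N'}[H_{l_0}]^k\times\prod_{j\in(l_0,n]}H_j$.
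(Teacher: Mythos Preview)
Your proof is correct and follows essentially the same approach as the paper's: thin $L$ so that the remaining $K_{\bar l_i}$ are large enough for Theorem~\ref{thm.genprodtree}, freeze all other coordinates to singletons, apply the theorem to the induced coloring, and then thin once more to extract $L'$ and $N'$ with the right interleaving and sizes. Your bookkeeping is in fact more careful than the paper's (which glosses over the interleaving and the condition $l'_0=l_0$); the deletion of $0$ from $I_1$ is harmless but not strictly needed, since if $0$ survives the pigeonhole step then $c'$ restricted to $[H'_0]^k$ is already constant with value $\epsilon$.
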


\begin{proof}
Let $r<\om$ be fixed.
Take $(i_p)_{p<\om}$ a strictly increasing sequence   so that
$i_0=0$ and   $|K_{l_{i_p}}|\ge R_k(m_0,r+1\dots,r+p)$.
For each $j\in (l_0,\om)\setminus \{l_{i_p}:p\ge 1\}$,
take $H_j\sse K_j$ of size one.
Then the coloring $c$ on
$$
\bigcup_{n\in N}[K_{l_0}]^k\times\prod\{K_{l_{i_p}}: p\ge 1\mathrm{\ and\ } l_{i_p}\le n\}
\times
\prod\{H_j:j\in (l_0,n]\setminus\{l_{i_q}:q\ge 1\}\}
$$
induces a coloring $c'$ on 
$\bigcup_{p<\om}[J_0]^k\times\prod_{q=1}^p J_q$,
where 
$J_q=K_{l_{i_q}}$, 
as follows:
For $p<\om$ and $(X_0,x_1,\dots, x_p)\in [J_0]^k\times\prod_{q=1}^p J_q$,
letting $Y_{l_0}=X_0$, $y_{l_{i_q}}=x_q$, and for each $j\in(l_0,n_{i_p}]\setminus \{l_{i_q}:q\le p\}$ letting $y_j$ denote the member of $H_j$, 
we define
$c'(X_0,x_1,\dots,x_p)=c(Y_{l_0},y_{l_0+1},\dots,y_{n_{i_p}})$.
Apply  Theorem \ref{thm.genprodtree}   to   $c'$ 
to obtain $H_{l_0}\in[K_{l_0}]^{m_0}$, subsets $H_{l_{i_p}}\in [K_{l_{i_p}}]^{r+p}$ for each $p\ge 1$,
and an infinite set $P$ such that $c'$ is constant on 
$\bigcup_{p\in P} [H_{l_0}]^k\times\prod_{1\le q\le p} H_{l_{i_q}}$.
Then letting $N'= \{n_{i_p}:p\in P\}$, $c$ is constant 
on 
$\bigcup_{n\in N'}[H_{l_0}]^k\times\prod_{l_0<j\le n} H_n$.
Letting $L'=\{l_{i_p}:p\in P\}$ finishes the proof.
\end{proof}

Now we are equipped to prove Theorem \ref{thm.newandimportant}.
\vskip.1in

\noindent \it Proof of Theorem \ref{thm.newandimportant}. \rm
Take $l_0$ least such that $|K_{l_0}|\ge R_k(R(m_0))$,
and let $L_0=N_0=[l_0,\om)$.
For each $j<l_0$, take some $H_j\in[K_j]^1$ and let $h\re l_0$ denote $\prod_{j<l_0}H_j$.
Then $c$ restricted to $\bigcup_{n\in N_0} (h\re l_0)\times [K_{l_0}]^k\times\prod_{j\in (l_0,n]}K_j$
induces a $2$-coloring on   $\bigcup_{n\in N_0}  [K_{l_0}]^k\times\prod_{j\in (l_0,n]}K_j$.
By Corollary \ref{cor. applform}, there are infinite $L_0'\sse L_0$ and  $N_0'\sse N_0$  such that  $l_0=l'_0\le n'_0<l'_1\le n'_1<\dots$,
and there are subsets $H^0_j\sse K_j$, $j\ge l_0$, such that 
$|H^0_{l_0}|=R(m_0)$,
$|H^0_{l'_i}|=i$ for each $i\ge 1$,
$|H^0_j|=1$ for each $j\in \om\setminus L'_0$,
and $c$ is constant on $\bigcup_{n\in N'_0}(h\re l_0)\times[H^0_{l_0}]^k\times\prod_{j\in(l_0,n]}H^0_j$.

Let $H_{l_0}=H^0_{l_0}$, and 
let  $n_0=\min(N'_0)$.
Then $n_0\ge l_0$.
Let $R^1_k(m)$ denote $R_k(m)$ and in general, let $R^{i+1}_k(m)$ denote $R_k(R_k^i(m))$.
Fix an
$l_1\in L'_0$ such that  $l_1>n_0$  and
$|H^0_{l_1}|\ge R^{R(m_0)}_k(R(m_0,m_1))$.
For $j\in(l_0,l_1)$, fix some $H_j\in [H_j^0]^1$, and 
let $h\re l_1$ denote $\prod_{j\in l_1\setminus\{l_0\}} H_j$.
Enumerate $H_{l_0}$ as $\{h_{l_0}^i:i<m_0\}$.
Successively apply 
 Corollary \ref{cor. applform}
$R(m_0)$ times 
to obtain $L_1\sse L_0'$ with $\min(L_1)=l_1$, $N_1\sse N_0'$,  $H_{l_1}\sse K_{l_1}$ of cardinality $R(m_0,m_1)$,
and subsets $H^1_j\sse K_j$  for $j\in [l_1,\om)$, such that 
  listing $L_1$ as $l_1=l^1_1<l^1_2<\dots$ we have  $|H^1_{l_i}|\ge i$ and satisfying the following:
For each fixed $h^*_{l_0}\in H_{l_0}$,
the coloring 
$c$ is constant on 
$$
\bigcup_{n\in N_1} (h\re l_1)\times \{h^*_{l_0}\}\times [H_{l_1}]^k\times \prod_{j\in (l_1,n]}H^1_j.
$$

In general, suppose for  $p\ge 1$, we have fixed $l_0\le n_0<\dots<l_p\le n_p$, and chosen
 infinite sets $L_p$, $N_p$ with $l_p=\min(L_p)$
and $l_p=l_p^p\le n_p^p<l_{p+1}^{p}\le n_{p+1}^{p}<\dots$  and sets $H_j\sse K_j$ for $j\le l_p$ and sets $H_j^p\sse K_j$ for $j>l_p$ such that the following hold:
\begin{enumerate}
\item
for each $i\le p$, $|H_{l_i}|=R(m_0,\dots,m_i)$,
\item
for each 
$l\in l_p\setminus\{l_i:i<p\}$, $|H_l|=1$,
\item
for  each $i> p$, $|H^p_{l^p_i}|\ge i$,
\item
and for each $j\in (l_p,\om)\setminus L_p$, $|H^p_j|=1$.
\end{enumerate}
Let 
$h\re l_p$ denote $\prod_{j\in l_p\setminus\{l_0,\dots,l_{p-1}\}}H_j$,
which is a product of singletons.
By our construction so far, we have ensured that for each sequence $\bar{x}\in\Pi_{i<p} H_{l_i}$, $c$ is constant on 
$$
\bigcup_{n\in N_p}h\re l_p\times \bar{x}\times[H_{l_p}]^k\times\prod_{j\in (l_p,n]}H^p_j.
$$
Let $n(p)=|\prod_{i\le p} H_{l_i}|$.
Fix $n_p\in N_p$  such that $n_p\ge l_p$ and 
take $l_{p+1}\in L_p$ such that  $l_{p+1}>n_p$ and  $|H^p_{l_{p+1}}|=R^{n(p)}_k(R(m_0,\dots,m_{p+1}))$.
After  $n(p)$ successive applications of  Corollary \ref{cor. applform}, 
we obtain 
$L_{p+1}\sse L_p$ and
$N_{p+1}\sse N_p$ 
with $\min(L_{p+1})=l_{p+1}$,
subsets 
 $H_j\sse K_j$ for $j\in (l_p, l_{p+1}]$ and sets $H_j^{p+1}\sse H_j^p$ for $j> l_{p+1}$ such that the following hold:
\begin{enumerate}
\item
$|H_{l_{p+1}}|=R(m_0,\dots,m_{p+1})$,
\item
for each 
$l\in (l_p,l_{p+1})$, $|H_l|=1$,
\item
for  each $i> p+1$, $|H^{p+1}_{l^{p+1}_i}|\ge i$,
\item
and for each $j\in (l_{p+1},\om)\setminus L_{p+1}$, $|H^p_j|=1$;
\end{enumerate}
and moreover,  letting 
 $h\re l_{p+1}=\prod_{j\in l_{p+1}\setminus\{l_0,\dots,l_p\}}H_j$,
for each $\bar{x}\in  \prod_{i\le p}H_{l_i}$,
$c$ is constant on 
$$
\bigcup_{n\in N_{p+1}}(h\re l_{p+1})\times \bar{x}\times[H_{l_{p+1}}]^k\times\prod_{j\in (l_{p+1},n]}H^{p+1}_j.
$$
Then fix an $n_{p+1}\in N_{p+1}$ such that $n_{p+1}\ge l_{p+1}$.

In this manner, we obtain  $L=\{l_i:i<\om\}$ and $N=\{n_i:i<\om\}$ such that 
$
l_0\le n_0<l_1\le n_1<l_2\le n_2<\dots$,
and 
$H_j\sse K_j$, $j<\om$, such that 
 $|H_{l_i}|=R(m_0,\dots,m_i)$ for each $i<\om$,
$|H_j|=1$ for each $j\in\om\setminus L$,
and for each $p<\om$, for each $\bar{x}\in \prod_{i\le p}H_{l_i}$,
$c$ is  constant on
$$
\bigcup_{n\in N\cap[l_p,\om)} (h\re l_p)\times\bar{x}\times[H_{l_p}]^k\times\prod_{j\in (l_p,n]}H_j.
$$
Defining  $c'(\bar{x})$ to be this constant color induces a $2$-coloring on $\bigcup_{p\in\om}\prod_{i\le p}H_{l_i}$.
Since each $|H_{l_i}|=R(m_0,\dots,m_i)$,
we may apply Theorem \ref{thm.3.21} to obtain $H^*_{l_i}\sse H_{l_i}$ of cardinality $m_i$ and an infinite subset $N^*\sse N$ such that
$\bar{c}$ is constant on $\bigcup_{n\in N^*}
\prod_{i\le n}H^*_{l_i}$.
Then letting $H^*_j=H_j$  for $j\not\in L$, and letting $L^*$ be any subset of $\{l_i:i<\om\}$ such that $l^*_0\le n^*_0<l^*_1\le n^*_1<\dots$,
$c$ is constant on 
$$
 \bigcup_{n\in N^*}
\bigcup_{l\in L^*\cap (n+1)}
 [H_l^*]^k\times \prod_{j\in (n+1)\setminus \{l\}}H^*_j.
$$
\hfill$\square$


\section{Topological Ramsey spaces as dense subsets in three examples of creature forcings}\label{sec.tRs}

In  \cite{Roslanowski/Shelah13}, Ros{\l}anowski and Shelah 
proved partition theorems on countable sets of finitary functions denoted $\mathcal{F}_{\mathbf{H}}$ (see Definition \ref{defn.RS2.2} below).
Their proofs involved
using the subcomposition operation $\Sigma$ to define  a binary relation  giving rise to a
 semi-group, and then 
proving  the existence of 
 an idempotent ultrafilter (or a sequence of idempotent ultrafilters in the tight case) by utilizing the Glazer technique including applications of 
 Ellis' Lemma.
These partition theorems, stated
as  Observation 2.8 (3) and Conclusions 3.10 and 4.8  in 
\cite{Roslanowski/Shelah13},
show that, under certain assumptions on the creating pair, 
 given any finite  partition of  $\mathcal{F}_{\mathbf{H}}$, there is a pure candidate such that the collection of possibilities it codes is contained in one piece of the partition.
The terminology {\em pure candidate}  refers to  the fact  that the  infinite sequence of creatures does not have  a trunk.

Included in \cite{Roslanowski/Shelah13} are 
four specific  examples of pure candidates for creature forcings, to which these partition theorems apply.
In this section, we  show 
that  for  three of these examples, the collections of pure candidates  contain dense subsets which form topological Ramsey spaces.
We obtain as corollaries
Conclusion 4.8 for Example 2.10
(see Theorem \ref{thm.2.10RS})
and 
Observation 2.8 (3)  for Example 2.11 
(see Proposition \ref{prop.RS2.11})
in \cite{Roslanowski/Shelah13},
as the sets of possibilities from these pure candidates can be recovered from $r_1[0,\bar{t}\,]$, but not vice versa; the partition theorems for possibilities from pure candidates 
do not in general imply  the axiom \bf A.4\rm.

To show that a dense subset of a collection of pure candidates $\bar{t}$ forms a topological Ramsey space, 
it suffices by the
  Abstract Ellentuck Theorem \ref{thm.AET} to 
 define a notion of $k$-th approximation  of $\bar{t}$
and  a quasi-ordering $\le_{\rm{fin}}$ on the collection of finite approximations (in our cases this will be a partial ordering), and then prove
 that the Axioms \bf A.1 \rm - \bf A.4 \rm hold.
In each of the examples below, given a  creating pair $(K,\Sigma)$, 
we shall form a dense subset of the pure candidates, call it $\mathcal{R}(K,\Sigma)$, partially ordered by the partial ordering inherited from 
the collection of all pure candidates (see Definition 2.3 (2) in \cite{Roslanowski/Shelah13}).

For each $\bar{t}=(t_0,t_1,\dots)\in \mathcal{R}(K,\Sigma)$,
for $k<\om$, we let $r_k(\bar{t}\,)=(t_i:i<k)$.
Thus, $r_0(\bar{t}\,)$ is the empty sequence, and $r_1(\bar{t}\,)=(t_0)$,  a sequence of length one containing exactly one member of $K$.
Let $\mathcal{AR}_k$ denote $
\{r_k(\bar{t}\,):\bar{t}\in \mathcal{R}(K,\Sigma)\}$
 and 
 $\mathcal{AR}$ denote $\bigcup_{k<\om}\mathcal{AR}_k$. 
For $a\in\mathcal{AR}$ and $\bar{t}\in\mathcal{R}(K,\Sigma)$,
write $a\sqsubset \bar{t}$ if and only if $a=r_k(\bar{t}\,)$ for some $k<\om$.
The basic open sets in the Ellentuck topology are   defined as
$[a,\bar{t}\,]=\{\bar{s}\in \mathcal{R}(K,\Sigma): a\sqsubset \bar{s}$ and $\bar{s}\le \bar{t}\,\}$,
for $a\in\mathcal{AR}$ and $\bar{t}\in  \mathcal{R}(K,\Sigma)$.
Define
 the partial ordering $\le_{\mathrm{fin}}$ on $\mathcal{AR}$ 
as follows:
For $a,b\in\mathcal{AR}$, 
$b\le_{\mathrm{fin}} a$ if and only if there are $\bar{s},\bar{t}\in\mathcal{R}(K,\Sigma)$ and $j,k<\om$ such that 
$\bar{t}\le \bar{s}$,
$b=r_k(\bar{t}\,)$, $a=r_j(\bar{s}\,)$
and $m^{t_{k-1}}_{\up}=m^{s_{j-1}}_{\up}$.
Abusing  notation, we also write $a\le_{\mathrm{fin}}\bar{t}$ if for some $k<\om$, $a\le_{\mathrm{fin}}r_k(\bar{t}\,)$.
Let $\mathcal{AR}_k|\bar{t}$ denote the set of all $a\in\mathcal{AR}_k$ such that $a\le_{\mathrm{fin}}\bar{t}$,
and let $\mathcal{AR}|\bar{t}$ denote $\bigcup_{k<\om}\mathcal{AR}_k|\bar{t}$.

Given this set-up, it is clear that \bf A.1 \rm holds.
Since for each $(s_0,\dots,s_{k-1})\in\mathcal{AR}$, $\Sigma(s_0,\dots,s_{k-1})$ is  finite, \bf A.2 \rm (1) holds. 
\bf A.2 \rm (2) is simply the definition of the partial ordering $\le$ when restricted to $\mathcal{R}(K,\Sigma)$, and \bf A.2 \rm (3) is straightforward to check, using the definition of $\le_{\mathrm{fin}}$.
\bf A.3 \rm follows from the definition of $\Sigma$.
Thus, showing that the pigeonhole principle \bf A.4 \rm holds for these examples is the main focus of this section.

We now include some of the relevant creature forcing terminology.
Knowing this vocabulary is not necessary for  the proofs,
but it is included here so  the interested reader can make connections between the proofs here and the more general genre of creature forcings.
In the following three examples, FP stands for {\em forgetful partial}, which is made explicit in Context 2.1 and  Definition 2.2 in \cite{Roslanowski/Shelah13}, and is reproduced here.

\begin{defn}[\cite{Roslanowski/Shelah13}, page 356]\label{defn.RS2.2}
Let $\mathbf{H}$ be a fixed function defined on $\om$ such that $\mathbf{H}(i)$ is a finite non-empty set for each $i<\om$.
The set of all finite non-empty functions $f$ such that $\dom(f)\sse\om$ and $f(i)\in\mathbf{H}(i)$ 
(for all $i\in\dom(f))$ will be denoted by
$\mathcal{F}_{\mathbf{H}}$.

An FP creature for $\mathbf{H}$ is a tuple 
$$ t=(\nor[t],\val[t],\dis[t],m^t_{\dn},m^t_{\up})$$
such that 
\begin{enumerate}
\item[$\bullet$]
 $\nor$ is a non-negative real number, $\dis$ is an arbitrary object, and $m^t_{\dn}<m^t_{\up}<\om$, and
\item[$\bullet$] 
$\val$ is a non-empty finite subset of $\mathcal{F}_{\mathbf{H}}$ such that $\dom(f)\sse[m^t_{\dn},m^t_{\up})$ for all $f\in\val$.
\end{enumerate}
\end{defn}

Partial refers to the fact that  the domains of the functions $f\in\val$ are  allowed to be  subsets of $[m^t_{\dn},m^t_{\up})$, rather than the whole interval.
For the definition of forgetful, see Definition 1.2.5 in \cite{Roslanowski/ShelahBK}.
For the definitions of an FFCC pair, $\Sigma$, loose, tight, pure candidate, the set of possibilities pos$(\bar{t})$ and pos$^{\tt tt}(\bar{t})$ on a pure candidate $\bar{t}$, the reader is referred to Definitions 2.2 and 2.3 in \cite{Roslanowski/Shelah13}.
\vskip.1in

\bf \noindent Example 2.10 in \cite{Roslanowski/Shelah13}. \rm
Let $\mathbf{H}_1(n)=n+1$ for $n<\om$ and let $K_1$ consist of all FP creatures $t$ for $\bfH_1$ such that 
\begin{enumerate}
\item[$\bullet$]
$\dis[t]=(u,i,A)=(u^t,i^t,A^t)$, where $u\sse[m^t_{\dn},m^t_{\up})$, $i\in u$, $\emptyset\ne A\sse \mathbf{H}_1(i)$,
\item[$\bullet$]
$\nor[t]=\log_2(|A|)$,
\item[$\bullet$]
$\val[t]\sse\prod_{j\in u}\mathbf{H}_1(j)$ is such that $\{f(i):f\in\val[t]\}=A$.
\end{enumerate}
For $t_0,\dots,t_n\in K_1$ with $m^{t_l}_{\up}= m^{t_{l+1}}_{\dn}$,
let $\Sigma_1^*(t_0,\dots,t_n)$ consist of all creatures $t\in K_1$ such that
$$
m^t_{\dn}=m^{t_0}_{\dn},\ 
m^t_{\up}=m^{t_n}_{\up},\
u^t=\bigcup_{l\le n} u^{t_l},\ 
i^t=i^{t_{l^*}},\
A^t\sse A^{t_{l^*}}\mathrm{\ for\ some\ } l^*\le n,
$$
and $\val[t]\sse\{f_0\cup\dots\cup f_n:(f_0,\dots,f_n)\in\val[t_0]\times\dots \times\val[t_n]\}$.
The collection of tight  pure candidates $\PC_{\infty}^{\tt tt}(K_1,\Sigma_1^*)$  is defined in Definition 2.3 in \cite{Roslanowski/Shelah13}, pure meaning that $\bar{t}$ is an infinite sequence without a trunk.
The partial ordering $\le$ on  $\PC_{\infty}^{\tt tt}(K_1,\Sigma_1^*)$ is defined by
$\bar{t}\le \bar{s}$ if and only if there  is a
strictly increasing sequence $(j_n)_{n<\om}$ such that 
each $t_n\in\Sigma_1^*(s_{j_n},\dots,s_{j_{n+1}-1})$.

\begin{rem}
Here,
$\bar{t}$ is the stronger condition, as
this reversal of the partial order notation of Ros{\l}anowski and Shelah
is better suited to the  topological Ramsey space framework.
\end{rem}

Ros{\l}anowski and Shelah proved that 
 $(K_1,\Sigma_1^*)$ is a tight FFCC pair (see Definition 2.2 in \cite{Roslanowski/Shelah13} with bigness (Definition 2.6, \cite{Roslanowski/Shelah13}) and $t$-multiadditivity (Definition 2.5, \cite{Roslanowski/Shelah13}), and  is gluing (basically meaning that neighboring creatures can be glued together to obtain another creature - see Definition 2.1.7, \cite{Roslanowski/ShelahBK}) on every $\bar{t}\in \PC_{\infty}^{\tt tt}(K_1,\Sigma_1^*)$.
FFCC stands for smooth ([\cite{Roslanowski/ShelahBK}, 1.2.5]) {\bf F}orgetful
monotonic ([\cite{Roslanowski/ShelahBK},  5.2.3])
strongly {\bf F}initary ([\cite{Roslanowski/ShelahBK},  1.1.3, 3.3.4])
{\bf C}reature {\bf C}reating pair.


Without going into more terminology than is necessary, we point out that 
in this particular example, for $\bar{t}\in \PC_{\infty}^{\tt tt}(K_1,\Sigma_1^*)$, the set of {\em possibilities} on the pure candidate $\bar{t}$ is
\begin{equation}
\pos^{\tt tt}(\bar{t})=\bigcup\{f_0\cup\dots\cup f_n: n\in\om\wedge\forall i\le n\, (f_i\in\val[t_i])\}.
\end{equation}
For $\bar{t}\in \PC_{\infty}^{\tt tt}(K_1,\Sigma_1^*)$ and $n<\om$,
$\bar{t}\lre n$ denotes $(t_n, t_{n+1},\dots)$, the tail of $\bar{t}$ starting at $t_n$.
The following is  {\em Conclusion 4.8} in \cite{Roslanowski/Shelah13} applied to this example.

\begin{thm}[Roslanowski/Shelah, \cite{Roslanowski/Shelah13}]\label{thm.2.10RS}
Let $\bar{t}\in \PC_{\infty}^{\tt tt}(K_1,\Sigma_1^*)$,
and for each $k<\om$, let $l_k\ge 1$ and $d_k:\pos^{\tt tt}(\bar{t}\lre k)\ra l_k$ be given. 
\begin{enumerate}
\item[(a)]
There is an $\bar{s}\le \bar{t}$ in $\PC_{\infty}^{\tt tt}(K_1,\Sigma_1^*)$
such that $m^{s_0}_{\dn}=m^{t_0}_{\dn}$ and for each $i<\om$,
if $k$ is such that $s_i\in\Sigma^{\tt tt}(\bar{t}\lre k)$,
then $d_k\re\pos^{\tt tt}(\bar{s}\lre i)$ is constant.
\item[(b)]
If there is a fixed $l\ge 1$ such that for each $k<\om$, $l_k=l$,
then
there is an $\bar{s}\le \bar{t}$ in $\PC_{\infty}^{\tt tt}(K_1,\Sigma_1^*)$ and an $l'<l$ 
for each $i<\om$, if $k$ is such that $s_i\in\Sigma^{\tt tt}(\bar{t}\lre k)$ and $f\in\pos^{\tt tt}(\bar{s}\lre i)$,
then $d_k(f)=l'$.
\end{enumerate}
\end{thm}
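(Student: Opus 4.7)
The plan is to deduce Theorem \ref{thm.2.10RS} as a corollary of the fact that a dense subset $\mathcal{R}(K_1,\Sigma_1^*)$ of $\PC_\infty^{\tt tt}(K_1,\Sigma_1^*)$ forms a topological Ramsey space. First I would specify this dense subset by imposing a growth/separation condition on the sequence $\bar{t}=(t_0,t_1,\dots)$ (for instance, requiring $\nor[t_n]\to\infty$ rapidly and that the intervals $[m^{t_n}_\dn,m^{t_n}_\up)$ are sufficiently large to permit the pigeonhole applications below), and verify that any pure candidate has a subcandidate in $\mathcal{R}(K_1,\Sigma_1^*)$. Axioms \textbf{A.1}, \textbf{A.2}, \textbf{A.3} are routine with the set-up given in this section: \textbf{A.1} is immediate from the definition of $r_k$, \textbf{A.2}(1) holds because $\Sigma_1^*(t_0,\dots,t_{j-1})$ is a finite set for each finite sequence, \textbf{A.2}(2)--(3) unpack the definitions of $\le$ and $\le_{\rm fin}$, and \textbf{A.3} follows from the gluing property of $(K_1,\Sigma_1^*)$.

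The substantive step, and the main obstacle, is the pigeonhole axiom \textbf{A.4}. Fix $\bar{t}\in\mathcal{R}(K_1,\Sigma_1^*)$ and a finite approximation $u=(t_0,\dots,t_{k-1})\sqsubset\bar{t}$, and let $\mathcal{O}\subseteq\mathcal{AR}_k$. A one-step extension of $u$ inside $\bar{t}$ is a creature $s\in\Sigma_1^*(t_{j_1},\dots,t_{j_n})$ for some $j_1<\dots<j_n$ in the tail $\bar{t}\lre k$; by the definition of $\Sigma_1^*$, choosing such an $s$ amounts to choosing a distinguished index $l^*\le n$, a nonempty subset $A^s\subseteq A^{t_{j_{l^*}}}$, and a $\val[s]$ compatible with the values on the other blocks. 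Coloring these extensions by membership in $\mathcal{O}$ therefore encodes a $2$-coloring of exactly the form
$$c\colon\bigcup_{n<\om}\bigcup_{l\le n}\bigl([K_l]^{k}\times\prod_{j\in(n+1)\setminus\{l\}}K_j\bigr)\to 2,$$
with $K_j$ the relevant coordinate set at stage $j$ and the size $k$ determined by the bigness we require in $\mathcal{R}(K_1,\Sigma_1^*)$. Applying Theorem \ref{thm.newandimportant} produces sets $H_j\subseteq K_j$ (large on an infinite subsequence, singleton elsewhere) on which $c$ is constant. Restricting the values of the creatures in the tail of $\bar{t}$ to these $H_j$'s and gluing singleton blocks into their neighbors (using gluing) yields the required $\bar{t}\,{}'\le\bar{t}$ with $r_k[u,\bar{t}\,']\subseteq\mathcal{O}$ or $r_k[u,\bar{t}\,']\cap\mathcal{O}=\emptyset$. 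Hence \textbf{A.4} holds, and Theorem \ref{thm.AET} gives that $\mathcal{R}(K_1,\Sigma_1^*)$ is a topological Ramsey space.

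With the Abstract Ellentuck Theorem in hand, I would derive Theorem \ref{thm.2.10RS}(b) first. A possibility $f\in\pos^{\tt tt}(\bar{t})$ is determined by a finite initial segment of creatures, so the coloring $d_0\colon\pos^{\tt tt}(\bar{t})\to l$ descends to a coloring of $\mathcal{AR}|\bar{t}$. For each color $l'<l$, the set
$$\mathcal{X}_{l'}=\{\bar{s}\le\bar{t}:\text{every possibility of every finite initial segment of }\bar{s}\text{ has color }l'\}$$
is closed in the Ellentuck topology, hence has the Baire property; since the $\mathcal{X}_{l'}$ cover $[\emptyset,\bar{t}]$ after refinement, the Ramsey property produces $\bar{s}\le\bar{t}$ and a single $l'<l$ with $[\emptyset,\bar{s}]\subseteq\mathcal{X}_{l'}$, which is exactly the statement of (b). Part (a) follows by a standard fusion: starting from $\bar{t}$, iterate the argument to secure, at stage $k$, that $d_k$ is constant on $\pos^{\tt tt}(\bar{s}\lre i)$ for every $i$ for which $s_i$ arises from $\Sigma^{\tt tt}(\bar{t}\lre k)$, using the Ramsey property on the tail $[\,(s_0,\dots,s_{k-1}),\bar{t}\,]$ and then diagonalizing.

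The principal difficulty is the verification of \textbf{A.4}, since this is where the combinatorial content of Ro\'{s}lanowski--Shelah's partition theorem is truly used; Theorem \ref{thm.newandimportant} is precisely tailored to this step, with the $k$-subset slot accommodating the choice of $A^s\subseteq A^{t_{j_{l^*}}}$ at the distinguished index and the product slots absorbing the values on neighboring blocks. Once \textbf{A.4} is in place, the derivation of Theorem \ref{thm.2.10RS} is essentially bookkeeping plus a fusion, and in fact yields strictly more information than the original partition theorem.
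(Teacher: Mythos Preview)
Your overall strategy coincides with the paper's: exhibit a dense subspace of $\PC_\infty^{\tt tt}(K_1,\Sigma_1^*)$, verify \textbf{A.4} via Theorem~\ref{thm.newandimportant}, and then read off (a) and (b). Two steps, however, do not go through as written. First, the dense subset cannot be left as an unspecified ``growth/separation'' condition. The paper takes precisely those $\bar t$ with $|A^{t_l}|=|\val[t_l]|=l+1$ and with $a\mapsto g^{t_l}_a$ a bijection $A^{t_l}\to\val[t_l]$; the crucial consequence is that every $s\in\mathcal{AR}_1|\bar t$ has $|\val[s]|=1$, so $s\mapsto f^s\in\val[s]$ gives a correspondence between $\mathcal{AR}_1|\bar t$ and $\pos^{\tt tt}(\bar t\,)$. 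Without this you have no canonical way to transport the $d_k$ to colorings of approximations, and your line ``$d_0$ descends to a coloring of $\mathcal{AR}|\bar t$'' has no content. Relatedly, your \textbf{A.4} sketch misses that members of $r_1[0,\bar t\,]$ need not begin at $m^{t_0}_\dn$; a single application of Theorem~\ref{thm.newandimportant} homogenizes only $r_1[0,\bar t\,]\cap\Sigma_1^*(\bar t\,)$, and the paper needs an additional fusion (its Claim~\ref{claim.b}) plus a further thinning to obtain \textbf{A.4} at $k=1$. (Also, in the tight case the blocks fed to $\Sigma_1^*$ must be \emph{consecutive}, not an arbitrary $j_1<\dots<j_n$.)

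Second, your derivation of (b) is incorrect. Part (b) concerns all the $d_k$ simultaneously---for each $i$ the relevant coloring is $d_k$ where $k$ is determined by $s_i\in\Sigma_1^*(\bar t\lre k)$---not $d_0$ alone; and your sets $\mathcal{X}_{l'}$ do not cover $[\emptyset,\bar t\,]$, since a generic $\bar s$ has possibilities of several colors, so ``cover after refinement'' is exactly the statement to be proved. The paper bypasses the Ellentuck machinery here entirely: using the bijection above it defines $c:\mathcal{AR}_1|\bar t\to l$ by $c(s)=d_{k(s)}(f^s)$, applies \textbf{A.4} directly to obtain $\bar s$ with $c$ constant on $r_1[0,\bar s\,]$, and observes that every $f\in\pos^{\tt tt}(\bar s\lre i)$ is the unique element of $\val[x]$ for some $x\in r_1[0,\bar s\,]$ with the correct $k(x)$. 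Part (a) is likewise obtained from the fusion Claim~\ref{claim.b} (applied to $c_k(x_0,\dots,x_k)=d_k(f)$ with $f$ the member of $\val[\min(x_k)]$), followed by one more thinning, rather than from the abstract Ramsey property.
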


\begin{defn}[The space $\mathcal{R}(\PC_{\infty}^{\tt tt}(K_1,\Sigma_1^*)),\le,r)$]
Let $\mathcal{R}(\PC_{\infty}^{\tt tt}(K_1,\Sigma_1^*))$
consist of those members  $\bar{t}\in
\PC_{\infty}^{\tt tt}(K_1,\Sigma_1^*)$ such that 
for each $l<\om$,
\begin{enumerate}
\item
$|A^{t_l}|=l+1$, and
\item
 for
 each $a\in A^{t_l}$,
there is exactly one function $g^{t_l}_a\in\val[t_l]$ 
such  that 
$g^{t_l}_a(i^{t_l})=a$.
\end{enumerate}
\end{defn}

It follows that for each $\bar{t}\in\mathcal{R}(\PC_{\infty}^{\tt tt}(K_1,\Sigma_1^*))$, for each $l<\om$,  
$\val[t_l]=\{g^{t_l}_a:a\in A^{t_l}\}$, and hence
$|\val[t_l]|=|A^{t_l}|=l+1$.
We point out that 
 $r_1[0,\bar{t}\,]$ consists of those $s\in \bigcup_{n<\om}\Sigma_1^*(\bar{t}\lre n)$ such that $|A^s|=|\val[s]|=1$.

\begin{thm}\label{thm.ex.1.10RamseySpace}
$\mathcal{R}(\PC_{\infty}^{\tt tt}(K_1,\Sigma_1^*)),\le,r)$ 
is a topological Ramsey space which is dense in 
the  partial ordering of all tight pure candidates  $\PC_{\infty}^{\tt tt}(K_1,\Sigma_1^*)$.
\end{thm}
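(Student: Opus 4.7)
\vskip.1in
\noindent\textbf{Proof plan.}
The plan is to apply the Abstract Ellentuck Theorem \ref{thm.AET}. This requires establishing density of $\mathcal{R}(\PC_{\infty}^{\tt tt}(K_1,\Sigma^*_1))$ in $\PC_{\infty}^{\tt tt}(K_1,\Sigma^*_1)$, closedness of $\mathcal{R}(\PC_{\infty}^{\tt tt}(K_1,\Sigma^*_1))$ as a subspace of $\mathcal{AR}^{\mathbb{N}}$, and verifying axioms \textbf{A.1}--\textbf{A.4}. Density is handled by taking $\bar{t}\in\PC_{\infty}^{\tt tt}(K_1,\Sigma^*_1)$, partitioning $\om$ into consecutive intervals $I_0<I_1<\dots$ long enough that each block contains some $l^*_p\in I_p$ with $|A^{t_{l^*_p}}|\ge p+1$ (possible since the norms on $\bar{t}$ tend to infinity), and amalgamating via $\Sigma^*_1$ into a creature $s_p$ with key index $i^{s_p}=i^{t_{l^*_p}}$, subset $A^{s_p}\subseteq A^{t_{l^*_p}}$ of size exactly $p+1$, and a single selected value $a_j\in A^{t_j}$ at each $j\in I_p\setminus\{l^*_p\}$; the resulting $\bar{s}$ lies in $\mathcal{R}$ with $\bar{s}\le\bar{t}$. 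Closedness is immediate because membership in $\mathcal{R}$ is a per-coordinate condition on the creatures, and \textbf{A.1}, \textbf{A.2}, \textbf{A.3} then follow from the definitions of $r_k$, $\le$, $\le_{\mathrm{fin}}$, and $\Sigma^*_1$, exactly as indicated in the paragraph above the theorem.

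The heart of the proof is axiom \textbf{A.4}. Fix $\bar{t}\in\mathcal{R}(\PC_{\infty}^{\tt tt}(K_1,\Sigma^*_1))$, let $u=r_k(\bar{t}\,)=(t_0,\ldots,t_{k-1})$, and let $c\colon\mathcal{AR}_{k+1}\to 2$. I will reformulate $c$ in a shape Theorem \ref{thm.newandimportant} can consume. A creature $s$ that can serve as $s_k$ for some $\bar{s}'\in\mathcal{R}$ with $r_k(\bar{s}'\,)=u$ and $\bar{s}'\le\bar{t}$ lies in $\Sigma^*_1(t_k,\ldots,t_{k+n})$ for some $n\ge 0$, and because $s$ must have the $\mathcal{R}$-form---namely $|A^s|=k+1$ with each member of $\val[s]$ uniquely determined by its value at $i^s$---it is encoded by a tuple
\[
(n,\,l,\,A,\,(a_j)_{j\in(n+1)\setminus\{l\}})
\]
where $l\in\{0,\ldots,n\}$ records $i^s=i^{t_{k+l}}$, $A\in[A^{t_{k+l}}]^{k+1}$ equals $A^s$, and $a_j\in A^{t_{k+j}}$ is the unique value selected at position $k+j$. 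Pulling $c$ back along this correspondence gives a coloring $c'$ on $\bigcup_n\bigcup_{l\le n}[K_l]^{k+1}\times\prod_{j\in(n+1)\setminus\{l\}}K_j$, where $K_j=A^{t_{k+j}}$.

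Since $|K_j|=k+j+1\ge j+1$, Theorem \ref{thm.newandimportant} applies with its parameter $k$ replaced by $k+1$ and with $m_i=k+i+1$; it yields infinite $L,N\subseteq\om$ with $l_0\le n_0<l_1\le n_1<\cdots$ and subsets $H_j\subseteq K_j$ with $|H_{l_i}|=k+i+1$ and $|H_j|=1$ for $j\notin L$, such that $c'$ is constant on the associated union. Set $s_i=t_i$ for $i<k$, and for each $p\ge 0$ let $s_{k+p}\in\Sigma^*_1(t_{k+n_{p-1}+1},\ldots,t_{k+n_p})$ (with the convention $n_{-1}=-1$) be the creature specified by key index $i^{t_{k+l_p}}$, set $A^{s_{k+p}}=H_{l_p}$ of size $k+p+1$, and singletons from $H_j$ at the remaining positions. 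Then $\bar{s}\in\mathcal{R}$ with $\bar{s}\le\bar{t}$; moreover, by transitivity of $\Sigma^*_1$ any $\bar{s}'\in[u,\bar{s}\,]$ has $s'_k\in\Sigma^*_1(t_k,\ldots,t_{k+n_p})$ for some $p$, with key index forced into $\{l_0,\ldots,l_p\}\subseteq L$, so the tuple encoding $s'_k$ lies inside the $c'$-homogeneous union. Hence $r_{k+1}[u,\bar{s}\,]$ is $c$-monochromatic, proving \textbf{A.4}. The main obstacle will be setting up the bijection between admissible $s_k$'s and tuples cleanly enough to match the exact product shape demanded by Theorem \ref{thm.newandimportant}; once that is done correctly, Theorem \ref{thm.newandimportant} does all the combinatorial work.
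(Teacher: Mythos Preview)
Your argument for $|u|\ge 1$ is correct and is essentially the paper's proof (its Claim for $r_k[k-1,\bar{t}\,]$, $k\ge 2$): encode the new creature by a product tuple and apply Theorem \ref{thm.newandimportant} once. The verification that every $\bar{s}'\in[u,\bar{s}\,]$ produces a tuple in the homogeneous union is also fine, because the fixed initial segment $u=(t_0,\ldots,t_{k-1})$ forces $j_0=0,\ldots,j_k=k$ in the witnessing sequence for $\bar{s}'\le\bar{s}$, so $s'_k$ really does start at $t_k$.

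There is, however, a genuine gap when $u=\emptyset$ (your $k=0$). You assert that any creature $s$ serving as $s'_0$ for some $\bar{s}'\le\bar{t}$ lies in $\Sigma^*_1(t_0,\ldots,t_n)$ for some $n$. This is false: the partial order on $\PC_\infty^{\tt tt}(K_1,\Sigma^*_1)$ does \emph{not} require the witnessing sequence $(j_n)$ to satisfy $j_0=0$, so $s'_0$ may lie in $\Sigma^*_1(t_{j_0},\ldots,t_{j_1-1})$ for any $j_0\ge 0$. Your encoding simply does not cover such creatures, and hence your constructed $\bar{s}$ need not make $r_1[\emptyset,\bar{s}\,]$ monochromatic---one can take $\bar{s}'\le\bar{s}$ whose zeroth creature begins strictly above $s_0$, and you have no control over its color.

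The paper deals with this by extra work specific to $\mathcal{AR}_1$. A single application of Theorem \ref{thm.newandimportant} only homogenizes the first approximations that start at $m^{t_0}_{\dn}$ (this is what your argument actually proves for $k=0$). The paper then runs a fusion (its Claim \ref{claim.b}) to stabilize, for every level $k$, the color obtained on $r_{k+1}[k,\bar{s}\,]$; finally it thins to a subsequence of levels on which all those constant colors agree, and builds $\bar{v}\le\bar{s}$ living on that subsequence. You need to add this fusion-plus-diagonal step; one shot of Theorem \ref{thm.newandimportant} is not enough for the $u=\emptyset$ instance of \textbf{A.4}.
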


\begin{proof}
Abbreviate 
$\mathcal{R}(\PC_{\infty}^{\mathrm{tt}}(K_1,\Sigma_1^*))$ as 
$\Rone$.
The space
$\Rone$
 is clearly  dense in $\PC_{\infty}^{\mathrm{tt}}(K_1,\Sigma_1^*)$.
 First we show \bf A.4 \rm holds for $r_k[k-1,\bar{t}\,]$ for all $k\ge 2$, and for $r_1[0,\bar{t}\,]\cap\Sigma^*_1(\bar{t}\,)$.
Then we will use a fusion argument to obtain \bf A.4 \rm for $r_1[0,\bar{t}\,]$.

\begin{claim}\label{claim.a}
Let $\bar{t}\in\Rone$, and 
let $C_k$ denote $r_1[0,\bar{t}\,]\cap \Sigma^*_1(\bar{t}\,)$ if $k=1$,
and  $r_k[k-1,\bar{t}\,]$ if  $k\ge 2$.
Let  $c:C_k \ra 2$ be  given.
Then
 there is an $\bar{s}\le \bar{t}$ in $\Rone$ with $m^{s_0}_{\dn}=m^{t_0}_{dn}$
such that,
if $k=1$,
$c$ is constant on $r_1[0,\bar{s}\,]\cap\Sigma^*_1(\bar{s}\,)$;
and 
if $k\ge 2$,  then $\bar{s}\in [k-1,\bar{t}\,]$ 
and  $c$ is constant on  $r_k[k-1,\bar{s}]$.
\end{claim}

\begin{proof}
Let $k\ge 1$ be fixed.
Each $\bar{x}\in C_k$ is of the form $\bar{x}=(t_0,\dots,t_{k-2},x_{k-1})$,  
where for some  $k-1\le l\le n$,
  $x_{k-1}\in\Sigma^*_1(t_{k-1},\dots,t_n)$,
$i^{x_{k-1}}=i^{t_l}$ and 
$A^{x_{k-1}}\in[A^{t_l}]^k$.
For $k=1$, $\bar{x}$ is simply $(x_0)$.
Notice that $x_{k-1}$ is completely determined by the sequence
$(n,l,A^{x_{k-1}},\lgl a_j:j\in[k-1,n]\setminus\{l\}\rgl)$, 
where  
for each $j\in[k-1,n]\setminus\{l\}$, $a_j$ is the member of $A^{t_j}$ such that every member $f\in\val[x_{k-1}]$ satisfies $f(i^{t_j})=a_j$.
Therefore,
$c$ induces a coloring on 
$$
\bigcup_{n\ge k-1}\ \left(\bigcup_{k-1\le l\le n}[A^{t_l}]^k\times\prod_{j\in [k-1,n]\setminus \{l\}} A^{t_j}\right).
$$

Apply Theorem \ref{thm.newandimportant} to the sequence of sets $A^{t_j}$, $j\ge k-1$
to obtain infinite sets $L, N$ and subsets $H_j\sse A^{t_j}$ such that 
$k-1\le l_0\le n_0<l_1\le n_1<\dots$, and 
for each  $p<\om$,  $|H_{l_p}|=k+p$, and for each $j\in\om\setminus L$, $|H_j|=1$;
and moreover, $c$ is constant on 
$$
\bigcup_{n\in N}\bigcup_{l\in L\cap (n+1)}
[H_l]^k\times\prod_{j\in   [k-1,n]\setminus \{l\}} H_j.
$$
Let $\bar{s}\in\Rone$ be defined  as follows: 
$(s_0,\dots,s_{k-2})=r_{k-1}(\bar{t})$.
For 
$p\ge k-1$, letting $q=p-(k-1)$ and $n_{-1}=k-2$,
let  $s_p$ be the member of $\Sigma^*_1(t_{n_{q-1}+1},\dots, t_{n_q})$ such that 
$i^{s_p}=i^{t_{l_q}}$,
$A^{s_p}=H_{l_q}$,
and  for all 
$f\in\val[s_p]$, $f(i^{t_j})\in H_j$ 
 for each $j\in (n_{q-1},n_q]$.
Then $\bar{s}$ is a member of $\Rone$ with  $\bar{s}\le\bar{t}$  satisfying  Claim \ref{claim.a}.
\end{proof}

Thus, we have proved \bf A.4 \rm for $r_k[k-1,\bar{t}\,]$, for  all $k\ge 2$.
As as step to proving \bf A.4 \rm for $r_1[0,\bar{t}\,]$, we first prove the following Claim.

\begin{claim}\label{claim.b}
Given $\bar{t}\in\Rone$ and colorings $c_k:
\mathcal{AR}_{k+1}|\bar{t}\ra l_k$,
 for some $l_k\ge 1$,
there is an $\bar{s}\le\bar{t}$ in $\Rone$ such that $m^{s_0}_{\dn}=m^{t_0}_{\dn}$,
$c_0$ is constant on $r_1[0,\bar{s}]\cap\Sigma^*_1(\bar{s})$, and for each $k\ge 1$,
the coloring $c_k$ on $r_{k+1}[k,\bar{s}]$ is constant.
\end{claim}

\begin{proof}
This is a standard fusion argument.
By Claim  \ref{claim.a}, there is an 
$\bar{s}^0=(s^0_0,s^0_1,\dots)$ in $\Rone$
with
$\bar{s}^0\le \bar{t}$  such that $m^{s^0_0}_{\dn}=m^{t_0}_{\dn}$ and 
$c_0$  is constant on $r_1[0,\bar{s}]\cap\Sigma^*_1(\bar{s}^0)$.
Let $s_0=s^0_0$.
Suppose $k\ge 1$ and $\bar{s}^{k-1}$ has been chosen.
Considering the coloring $c_k$ restricted to $r_{k+1}[k,\bar{s}^{k-1}]$,
 Claim  \ref{claim.a} implies  there is an $\bar{s}^k\in [k,\bar{s}^{k-1}]$ for which $c_k$ is constant on 
$r_{k+1}[k,\bar{s}^k]$.
Set $s_k=s^k_k$.
Then $\bar{s}=(s_0,s_1,\dots)\le\bar{t}$
is  in $\Rone$  and  satisfies  the Claim.
\end{proof}

Finally, to prove \bf A.4 \rm for $\mathcal{AR}_1$,
 let $c:r_1[0,\bar{t}\,]\ra 2$ be given.
For each $k<\om$,
define a coloring $c_k:\mathcal{AR}_{k+1}|\bar{t}\ra 2$ by
$c_k(x_0,\dots, x_{k})=c(\min(x_{k}))$,
where  we let $\min(x)$ denote the member of $\Sigma^*_1(x)$ such that 
$A^{\min(x)}=\{\min(A^x)\}$ and 
$\val[\min(x)]$ is the singleton $\{f\}$ where $f$ satisfies 
$f(i^{x})=\min(A^{x})$.
Take $\bar{s}\le \bar{t}$ satisfying Claim \ref{claim.b}.
Then there is 
a strictly increasing sequence $(k_j)_{j<\om}$ 
 such that
each $k_j\ge 2j+1$ and 
 the color of $c_{k_j}$ is the same on
$ r_{k_j+1}[k_j,\bar{s}\,]$,
for all $j<\om$.
Let $\bar{v}\le \bar{s}$ be the  member of $\Rone$
such that  for each $j<\om$,
$v_j\in\Sigma^*_1(s_{k_j},\dots,s_{k_{j+1}-1})$,
$i^{v_j}=i^{s_{k_{j+1}-1}}$,
and $A^{v_j}$ consists of the least $j+1$ members of $A^{s_{k_{j+1}-1}}$.
Then $\bar{v}\le\bar{t}$ and $c$ is constant on $r_1[0,\bar{v}]$.

Thus, \bf A.4 \rm holds, and therefore, by the Abstract Ellentuck Theorem \ref{thm.AET} and earlier remarks, $(\mathcal{R}^{\mathrm{tt}}(K_1,\Sigma_1^*),\le ,r)$ is a topological Ramsey space.
\end{proof}

We now show that Theorem \ref{thm.2.10RS}  is recovered from Theorem \ref{thm.ex.1.10RamseySpace}.
Let $\bar{t}$, $l_k$ and $d_k$ be as in the assumption of Theorem \ref{thm.2.10RS}.
Since $\Rone$ is dense in $\PC_{\infty}^{\tt tt}(K_1,\Sigma_1^*)$, we may without loss of generality  assume $\bar{t}\in\Rone$.
Then note that
for each $k<\om$,
$\pos^{\tt tt}(\bar{t}\lre k)=\bigcup\{\val[s_{k}]:(s_0,\dots,s_k)\in r_{k+1}[k,\bar{t}\,]\}$.

To prove part (a) of Theorem \ref{thm.2.10RS}, for each $k<\om$, define a function $c_k:\mathcal{AR}_k|\bar{t}\ra l_k$ by
$c_k(x_0,\dots,x_k)=d_k(f)$
where $f$ denotes {\em the} member of $\val[\min(x_k)]$, $\min(x_k)$ being  defined as above.
By Claim \ref{claim.b}, there is an $\bar{s}\le \bar{t}$ in $\Rone$ with $m^{s_0}_{\dn}=m^{t_0}_{\dn}$ such that $c_0$ is constant on $r_1[0,\bar{s}]\cap\Sigma^*_1(\bar{s})$, and for each $k\ge 1$, $c_k$ is constant on $r_{k+1}[k,\bar{s}]$.
Let $\bar{w}\le\bar{s}$ be the member of $\Rone$ determined by 
 $w_0=s_0$, and
for $n\ge 1$,  $w_n$ is the member of $\Sigma^*_1(s_{2n-1},s_{2n})$ such that 
$i^{w_n}=i^{s_{2n}}$, $A^{w_n}$ is the set of the $n+1$-least members of $A^{s_{2n}}$.
(There is exactly one such member of $\Rone$ with these properties.)
Then $\bar{w}\le\bar{s}$  with $m^{w_0}_{\dn}=m^{t_0}_{\dn}$ and
for each  $i<\om$, for the $k$ such that $w_i\in\Sigma^*(\bar{t}\lre k)$,
 $d_k$ is constant on $\pos^{\tt tt}(\bar{w}\lre i)$.

Part (b) of Theorem \ref{thm.2.10RS} follows immediately from \bf A.4\rm:
Recalling that for each $s\in\mathcal{AR}_1$, $|\val[s]|=1$,
define   $c:\mathcal{AR}_1|\bar{t}\ra l$
by 
$c(s)=d_{k(s)}(f^s)$, where $f^s$ is {\em the} member of $\val[s]$ and 
$k(s)$ is  the integer such that $s\in\Sigma^*_1(\bar{t}\lre k(s))$.
By \bf A.4\rm,
there is an $\bar{s}\le\bar{t}$ in $\Rone$ such that $c$ is constant on $r_1[0,\bar{s}]$, 
and hence, $\bar{s}$ satisfies
 (b) of Theorem  \ref{thm.2.10RS}.

\begin{rem}
Although the sets $r_1[0,\bar{t}\,]$ and $\pos^{\tt tt}(\bar{t}\,)$ are very closely related, as shown above,
for any $s\in r_1[0,\bar{t}\,]$,
$\val[s]$ loses   the information of $i^s$ (and hence also $A^s$)  from $s$.
Thus, it does not seem  that Theorem \ref{thm.2.10RS}  would  imply  the pigeonhole principle for $\mathcal{AR}_1$ on the topological Ramsey space
$\Rone$, let alone the Abstract Ellentuck Theorem for the space, which follows from our Theorem \ref{thm.ex.1.10RamseySpace}.
\end{rem}

\bf \noindent Example 2.11 in \cite{Roslanowski/Shelah13}. \rm
Let $\mathbf{H}_2(n)=2$ for $n<\om$ and let $K_2$ consist of all FP creatures $t$ for $\bfH_2$ such that 
\begin{enumerate}
\item[$\bullet$]
$\emptyset\ne \dis[t]\sse[m^t_{\dn},m^t_{\up})$, 
\item[$\bullet$]
$\emptyset\ne \val[t]\sse{}^{\dis[t]}2$,
\item[$\bullet$]
$\nor[t]=\log_2(|\val[t]|)$.
\end{enumerate}
For $t_0,\dots,t_n\in K_2$ with $m^{t_l}_{\up}\le m^{t_{l+1}}_{\dn}$,
let $\Sigma_2(t_0,\dots,t_n)$ consist of all creatures $t\in K_2$ such that
$$
m^t_{\dn}=m^{t_0}_{\dn},\ 
m^t_{\up}=m^{t_n}_{\up},\
\dis[t]=\dis[t_{l^*}],
\mathrm{\ and\ }
\val[t]\sse\val[t_{l^*}], \mathrm{\ for\ some\ } l^*\le n.
$$
The partial ordering $\le$ on  $\PC_{\infty}(K_2,\Sigma_2)$ is defined as follows:
$\bar{t}\le \bar{s}$ if and only if there  is a
sequence $(u_n)_{n<\om}$
of finite subsets of $\om$ such that $\max(u_n)<\min(u_{n+1})$
and for each $n<\om$,
 $t_n\in\Sigma_2(\bar{s}\re u_n)$,
where $\bar{s}\re u_n$ denotes the sequence $(s_i:i\in u_n)$.
For $\bar{t}\in \PC_{\infty}(K_2,\Sigma_2)$, $\pos(\bar{t}\,)$ is defined to be $\bigcup\{\val[t_n]:n<\om\}$.

Ros{\l}anowski and Shelah proved that 
 $(K_2,\Sigma_2)$ is a loose FFCC pair for $\mathbf{H}_2$ which is simple except omitting and  has
 bigness.
Thus, the following Observation 2.8 (3) in 
 \cite{Roslanowski/Shelah13} applies to this
 example to yield the following.

\begin{prop}[Ros{\l}anowski/Shelah, \cite{Roslanowski/Shelah13}]\label{prop.RS2.11}
 For any  $\bar{t}\in \PC_{\infty}(K_2,\Sigma_2)$ and any coloring  $d:\pos(\bar{t}\,)\ra l$,
for some $l\ge 1$, 
there is an $\bar{s}\le\bar{t}$ such that
$c$ is constant on  $\pos(\bar{s})$.
\end{prop}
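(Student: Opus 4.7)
The plan is a direct pigeonhole argument. In contrast to Example 2.10, the creature operation $\Sigma_2$ is loose in the sense that it keeps a single index $l^*$ and merely thins $\val[t_{l^*}]$, so each element $f\in\pos(\bar{t}\,)=\bigcup_n\val[t_n]$ lies in exactly one $\val[t_n]$; the coloring $d$ is therefore one-dimensional along $n$. In particular, no product tree Ramsey theorem is needed for the proposition itself, though Theorem \ref{thm.newandimportant} will be used separately to establish Axiom \textbf{A.4} for a dense topological Ramsey subspace $\mathcal{R}(K_2,\Sigma_2)$ of pure candidates with prescribed growth of $|\val[t_n]|$, from which this proposition can also be recovered as a corollary.

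First, I would pass to a canonical form. Since $\nor[t_n]=\log_2(|\val[t_n]|)\to\infty$ for any pure candidate $\bar{t}$ and since $\Sigma_2$-thinning produces $\le$-stronger conditions, by passing to a refinement we may assume $|\val[t_n]|\ge n+2$ for every $n<\om$. This refinement step also shows that such pure candidates are dense in $\PC_{\infty}(K_2,\Sigma_2)$, giving the natural candidate for $\mathcal{R}(K_2,\Sigma_2)$.

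Next, the argument splits into two pigeonholes. For each $n<\om$, partition $\val[t_n]$ via $d\re\val[t_n]$ and apply finite pigeonhole to obtain a color $c_n<l$ and a subset $V_n\sse\val[t_n]$ with $d[V_n]=\{c_n\}$ and $|V_n|\ge|\val[t_n]|/l$; in particular $|V_n|\to\infty$. Then apply infinite pigeonhole to the sequence $(c_n)_{n<\om}$ in the finite set $l$ to produce an infinite $N=\{n_0<n_1<\cdots\}\sse\om$ and a color $c^*<l$ with $c_{n_k}=c^*$ for all $k<\om$.

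Finally, I would assemble $\bar{s}$. For each $k<\om$, define $s_k\in\Sigma_2(t_{n_k})$ by $m^{s_k}_{\dn}=m^{t_{n_k}}_{\dn}$, $m^{s_k}_{\up}=m^{t_{n_k}}_{\up}$, $\dis[s_k]=\dis[t_{n_k}]$, and $\val[s_k]=V_{n_k}$. Then $\nor[s_k]=\log_2(|V_{n_k}|)\to\infty$, so $\bar{s}=(s_k)_{k<\om}$ lies in $\PC_{\infty}(K_2,\Sigma_2)$; the singleton blocks $u_k=\{n_k\}$ witness $\bar{s}\le\bar{t}$; and $\pos(\bar{s}\,)=\bigcup_k V_{n_k}\sse d^{-1}(c^*)$, so $d$ is constant on $\pos(\bar{s}\,)$ with value $c^*$. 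The main technical point (not really an obstacle) is verifying that $s_k\in\Sigma_2(\bar{t}\re u_k)=\Sigma_2(t_{n_k})$, which is immediate from the definition of $\Sigma_2$ taken with one creature in the input tuple.
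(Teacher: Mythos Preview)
Your proof is correct and takes a genuinely different, more elementary route from the paper. The paper does not argue directly; rather, it first proves that the dense subspace $\mathcal{R}(K_2,\Sigma_2)$ is a topological Ramsey space (Theorem \ref{thm.ex.1.11RamseySpace}), invoking the full strength of the product tree variant Theorem \ref{thm.newandimportant} to verify Axiom \textbf{A.4}, and then recovers Proposition \ref{prop.RS2.11} as an immediate corollary: given $d$, one sets $c(x)=d(f)$ where $\{f\}=\val[x]$ for $x\in r_1[0,\bar{t}\,]$, applies \textbf{A.4} to homogenize $c$, and reads off a monochromatic $\pos(\bar{s})$. Your argument bypasses all of this with two elementary pigeonholes, exploiting precisely the feature you identify: $\Sigma_2$ only selects one index and thins its $\val$, so $\pos(\bar{t}\,)$ is simply the disjoint union $\bigcup_n\val[t_n]$ and the coloring is one-dimensional in $n$. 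Your approach is far more economical for the proposition itself; the paper's detour is longer but delivers the stronger conclusion (the Abstract Ellentuck Theorem for the space), which is its real objective. One minor note: your preliminary refinement to $|\val[t_n]|\ge n+2$ is unnecessary, since $\nor[t_n]\to\infty$ already gives $|\val[t_n]|\to\infty$, and hence $|V_n|\ge|\val[t_n]|/l\to\infty$ directly.
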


We point out that this is the same statement as Conclusion 3.10 in  \cite{Roslanowski/Shelah13}, the   only difference  being the hypotheses on the creating pair.

\begin{defn}[The space $(\mathcal{R}(\PC_{\infty}(K_2,\Sigma_2)),\le,r)$]
Let 
$$
\mathcal{R}(\PC_{\infty}(K_2,\Sigma_2))
=\{\bar{s}\in \PC_{\infty}(K_2,\Sigma_2): \forall l<\om,\ |\val[t_l]\, |=l+1\},
$$
with its inherited partial ordering.
Abbreviate this space by $\mathcal{R}(K_2,\Sigma_2)$.
\end{defn}

\begin{thm}\label{thm.ex.1.11RamseySpace}
$(\mathcal{R}(K_2,\Sigma_2),\le,r)$ is a topological Ramsey space which is dense in 
the  partial ordering of all  pure candidates  $\PC_{\infty}(K_2,\Sigma_2)$.
\end{thm}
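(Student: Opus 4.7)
The plan is to verify that $(\mathcal{R}(K_2,\Sigma_2),\le,r)$ satisfies density together with axioms \textbf{A.1}--\textbf{A.4}. For density, given $\bar{u}\in\PC_\infty(K_2,\Sigma_2)$, the pure-candidate hypothesis forces $\nor[u_l]=\log_2|\val[u_l]|$ to tend to infinity, so I extract a strictly increasing $(v_l)_{l<\om}$ with $|\val[u_{v_l}]|\ge l+1$ and let $t_l$ be the creature in $\Sigma_2(u_{v_l})$ with $\dis[t_l]=\dis[u_{v_l}]$ and $\val[t_l]\subseteq\val[u_{v_l}]$ of cardinality exactly $l+1$; then $\bar{t}\in\mathcal{R}(K_2,\Sigma_2)$ and $\bar{t}\le\bar{u}$. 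Axioms \textbf{A.1}--\textbf{A.3} are routine from the general setup described in the paragraphs preceding Theorem \ref{thm.ex.1.10RamseySpace}.

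The crux is \textbf{A.4}, which I plan to prove in the same three-step pattern as Theorem \ref{thm.ex.1.10RamseySpace}: an analog of Claim \ref{claim.a} for $k\ge 2$, an analog of Claim \ref{claim.b} obtained by a diagonal fusion across levels, and a final fusion that delivers \textbf{A.4} for $r_1[0,\bar{t}\,]$. The key observation is that any $\bar{x}\in r_k[k-1,\bar{t}\,]$ has the form $(t_0,\dots,t_{k-2},x_{k-1})$, and by the definition of $\Sigma_2$ the creature $x_{k-1}$ is completely determined by a 4-tuple $(a,b,l^*,V)$ with $k-1\le a\le l^*\le b$ and $V\in[\val[t_{l^*}]]^k$, via $m^{x_{k-1}}_{\dn}=m^{t_a}_{\dn}$, $m^{x_{k-1}}_{\up}=m^{t_b}_{\up}$, $\dis[x_{k-1}]=\dis[t_{l^*}]$, and $\val[x_{k-1}]=V$. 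Taking $K_j=\val[t_{j+k-1}]$ (so $|K_j|=j+k\ge j+1$) and $m_i=k+i$, a single application of Theorem \ref{thm.newandimportant} yields infinite sets $L,N$ with $l_0\le n_0<l_1\le n_1<\dots$ together with subsets $H_j\subseteq K_j$ of the prescribed cardinalities, homogenizing the coloring in the variables $(b,l^*,V)$ for any fixed left endpoint $a$.

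The main difficulty, absent from Example 2.10, is the free parameter $a$: in the $\Sigma^*_1$ example the tight-order structure pins the left endpoint of $s_{k-1}$ to $m^{t_{k-1}}_{\dn}$, forcing $a=k-1$, but for the looser $\Sigma_2$ order $a$ ranges freely over the infinite set $\{m^{s_{k-1+p}}_{\dn}:p\ge 0\}$ of left endpoints of the $\bar{s}$ we build. To absorb $a$, I propose a nested fusion: inductively in $p$, having fixed $s_0,\dots,s_{k-2+p}$ and a cofinal tail of $\bar{t}$ compatible with these choices, apply Theorem \ref{thm.newandimportant} to the coloring of triples $(b,l^*,V)$ obtained by pinning $a=\alpha_p:=m^{s_{k-1+p}}_{\dn}$, thin the tail so that this coloring becomes constant with some color $\varepsilon_p\in\{0,1\}$, choose $s_{k-1+p}$ compatibly, and iterate; a concluding infinite pigeonhole on $(\varepsilon_p)_{p<\om}$ selects an infinite set $P\subseteq\om$ along which all $\varepsilon_p$ coincide, and passing to the sub-candidate of $\bar{s}$ indexed by $P$ (with $\val$-sizes reduced as needed) yields monochromaticity across all four parameters. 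With this analog of Claim \ref{claim.a} established, the analog of Claim \ref{claim.b} is a routine diagonal fusion, the final fusion yielding \textbf{A.4} for $r_1[0,\bar{t}\,]$ proceeds essentially verbatim from the Example 2.10 argument, and the Abstract Ellentuck Theorem \ref{thm.AET} then completes the proof. The principal obstacle is the careful bookkeeping in the nested fusion so that each stage's application of Theorem \ref{thm.newandimportant} remains compatible with the previously chosen $s_i$'s, and so that the final pigeonhole on the $\varepsilon_p$ combines coherently with the stage-wise monochromaticity in $(b,l^*,V)$.
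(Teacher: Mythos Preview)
Your proposal is correct and its core argument---fusing over the free left-endpoint parameter $a$, applying Theorem \ref{thm.newandimportant} at each stage to homogenize in $(b,l^*,V)$, and then pigeonholing on the stage-colors $\varepsilon_p$---is exactly what the paper does (packaged there as Claim \ref{claim.c} together with the surrounding fusion). The one organizational difference is that the paper runs this single argument uniformly for all $k\ge 1$: because the order on $\PC_\infty(K_2,\Sigma_2)$ is loose, the left endpoint is free even for $k=1$, so your step (1) already delivers \textbf{A.4} at every level and your proposed analogs of Claim \ref{claim.b} and the separate $k=1$ fusion are unnecessary (though harmless).
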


\begin{proof}
It is clear that $\mathcal{R}(K_2,\Sigma_2)$ forms a dense subset of $\PC_{\infty}(K_2,\Sigma_2)$.
Towards proving  that   \bf A.4 \rm holds,
let $k\ge 1$ be fixed,  $\bar{t}\in\mathcal{R}(K_2,\Sigma_2)$, and 
 $c:r_k[k-1,\bar{t}\,]\ra 2$ be a given coloring.
Each $\bar{x}\in r_k[k-1,\bar{t}\,]$ is of the form $\bar{x}=(t_0,\dots,t_{k-2},x_{k-1})$,  with  $x_{k-1}\in \Sigma_2(\bar{t}\lre j)$ for some $j\ge k-1$,  and 
$\dis[x_{k-1}]=\dis[t_l]$
and $\val[x_{k-1}]\in[\val[t_l]]^k$,  for some 
$l\in[k-1,j]$.
For any $\bar{v}\in \mathcal{R}(K_2,\Sigma_2)$ and $j\ge k-1$,
let 
$$
X(\bar{v},j)=\{x_{k-1}:\bar{x}\in r_k[k-1,\bar{v}]\}\cap\Sigma_2(\bar{v}\lre j).
$$
Define a coloring $c'$  on the members of all such $X(\bar{v},j)$ by  
$$
c'(x_{k-1})=c(t_0,\dots,t_{k-2},x_{k-1}).
$$

The proof of \bf A.4 \rm proceeds  by a fusion argument as follows.
Letting $\bar{t}^{k-2}$ denote $\bar{t}$, 
for each $j\ge k-1$, given $\bar{t}^{j-1}$, Claim \ref{claim.c} below yields  a $\bar{t}^j\in [j,\bar{t}^{j-1}]$ such that 
$c'$ is constant on 
$X(\bar{t}^j,j)$.
Define $\bar{s}\le\bar{t}$ by letting 
 $r_{k-1}(\bar{s})=r_{k-1}(\bar{t}\,)$ and $s_j=t^j_j$ for  all $j\ge k-1$.
Then $\bar{s}$ has  the property that 
for each $j\ge k-1$, $c'$ is constant on $X(\bar{s},j)$.
Take a strictly increasing sequence $(j_i)_{i\ge k-1}$, (with $j_{k-1}\ge k-1$), such that $c'$ has the same value on  all $X(\bar{s}, j_i)$.
Define $\bar{w}\in [k-1,\bar{s}\,]$ by
$r_{k-1}(\bar{w})=r_{k-1}(\bar{t})$, and 
for $i\ge k-1$,
take $w_i$ to be any member of $\Sigma_2(s_{j_i},\dots,s_{j_{i+1}-1})$
such that $|\val[w_i]|=i+1$.
Then $\bar{w}\in [k-1,\bar{t}\,]$ and $c$ is constant on $r_k[k-1,\bar{w}\,]$.

\begin{claim}\label{claim.c}
For  $j\ge k-1$, given $\bar{t}^{j-1}$,  there is a $\bar{t}^j\in [j,\bar{t}^{j-1}]$ such that $c'$ is constant on $X(\bar{t}^j,j)$.
\end{claim}

\begin{proof}
Each $x_{k-1}\in  X(\bar{t}^{j-1},j)$
is completely determined by the triple 
$(n,l,\val[x_{k-1}])$, where
$x_{k-1}\in\Sigma_2(t_j,\dots,t_n)$ and $l\in[j,n]$ is such that $\val[x_{k-1}]\in[\val[t_l]]^k$.
Thus, we may regard $c'$ on $X(\bar{t}^{j-1},j)$  as a coloring of triples from
$$
\{ (n,l,J):j\le l\le n\ \mathrm{and\ } J\in[\val[t_l]]^k\}.
$$
Letting $K_l=\val[t_l]$, we see that 
$c'$ induces a coloring $c''$ on 
$$
\bigcup_{j\le l\le n}[K_l]^k
\times\prod \{K_i: j\le i\le n,\ i\ne l\}
$$
as follows:
For $j\le l\le n$, any $p_i\in K_i$ ($i\ne l$) and $J_l\in[K_l]^k$,  
define  
$$
c''(p_{k-1},\dots,p_{l-1},J_l,p_{l+1},\dots,p_n)=c'(n,l,J_l).
$$
By Theorem \ref{thm.newandimportant}, we obtain 
infinite sets $L=\{l_p:p\ge j\}$, $N=\{n_p:p\ge j\}$ such that
$j\le  l_{j}:=\min(L)\le n_{j}<l_{j+1}\le n_{j+1}<\dots$, 
 and  subsets
$H_i\sse K_i$ such that 
for each  $p\ge j$,  $|H_{l_p}|=p+1$, and for each $i\not\in  L$, $|H_i|=1$, 
and moreover, $c''$ is constant on 
$$
\bigcup_{n\in N}\bigcup_{l\in L\cap (n+1)}
[H_l]^k\times\prod \{H_i:    j\le i\le n, \ i\ne l\}.
$$
Take $\bar{t}^j\in [j,\bar{t}^{j-1}]$
such that 
for each $p\ge j$,  
$t^j_p$  is the creature 
in $\Sigma_2(t^{j-1}_{n_{p-1}+1},\dots,t^{j-1}_{n_p})$
determined by
$\dis[t^j_p]=\dis[t^{j-1}_{l_p}]$, and 
$\val[t^j_p]=  H_{l_p}$.
Then the coloring $c'$  is constant on $X(\bar{t}^j,j)$.
\end{proof}

Thus, Claim \ref{claim.c} holds, and by the fusion argument above, along with previous remarks about \bf A.1 \rm - \bf A.3 \rm holding, 
$\mathcal{R}(K_2,\Sigma_2)$ is a topological Ramsey space.
\end{proof}

 Proposition \ref{prop.RS2.11}  is recovered  immediately from Theorem \ref{thm.ex.1.11RamseySpace}:
Noting that  $\pos(\bar{t}\,)=\bigcup\{\val[t_n]:n<\om\}
=\bigcup\{\val[x]:x\in r_1[0,\bar{t}\,]\}$,
given any coloring $d:\pos(\bar{t}\,)\ra 2$,
define a coloring $c:r_1[0,\bar{t}\,]\ra 2$ by
$c(x)=d(f)$, where $\{f\}=\val[x]$.
By \bf A.4 \rm for $r_1[0,\bar{t}\,]$,
there is an $\bar{s}\le \bar{t}$  in $\mathcal{R}(K_2,\Sigma_2)$ such that 
$c$ is constant on $r_1[0,\bar{s}\,]$.
Hence, $d$ is constant on $\pos(\bar{s}\,)$.
\vskip.1in

\bf \noindent Example 2.13 in \cite{Roslanowski/Shelah13}. \rm
Let $N>0$ and $\mathbf{H}_N(n)=N$ for $n<\om$.
Let $K_N$ consist of all FP creatures $t$ for $\bfH_N$ such that 
\begin{enumerate}
\item[$\bullet$]
$\dis[t] =(X_t,\vp_t)$, where  $X_t\subsetneq[m^t_{\dn},m^t_{\up})$, 
and $\vp_t:X_t\ra N$,
\item[$\bullet$]
$\nor[t]=m^t_{\up}$,
\item[$\bullet$]
$\val[t]=\{f\in {}^{[m^t_{\dn},m^t_{\up})}N:\vp_t\sse f$ and $f$ is constant on $[m^t_{\dn},m^t_{\up})\setminus X_t\}$.
\end{enumerate}
For $t_0,\dots,t_n\in K_2$ with $m^{t_l}_{\up}= m^{t_{l+1}}_{\dn}$,
let $\Sigma_N(t_0,\dots,t_n)$ consist of all creatures $t\in K_N$ such that
\begin{enumerate}
\item[$\bullet$]
$m^t_{\dn}=m^{t_0}_{\dn}$,  
$m^t_{\up}=m^{t_n}_{\up}$,
$X_{t_0}\cup\dots\cup X_{t_n}\sse X_t$,
\item[$\bullet$]
for each $l\le n$, either $X_t\cap[m^{t_l}_{\dn},m^{t_l}_{\up})=X_{t_l}$ and $\vp_t\re[m^{t_l}_{\dn},m^{t_l}_{\up})=\vp_{t_l}$,\\
or $[m^{t_l}_{\dn},m^{t_l}_{\up})\subsetneq X_t$ and $\vp_t\re [m^{t_l}_{\dn},m^{t_l}_{\up})\in\val[t_l]$.
\end{enumerate}
The partial ordering $\le$ on  $\PC^{\tt tt}_{\infty}(K_N,\Sigma_N)$ is defined by
$\bar{t}\le \bar{s}$ if and only if there  is a
strictly increasing sequence $(j_n)_{n<\om}$ such that 
each $t_n\in\Sigma_N(s_{j_n},\dots,s_{j_{n+1}-1})$.

Ros{\l}anowski and Shelah proved that 
 $(K_N,\Sigma_N)$ is a tight FFCC pair for $\mathbf{H}_N$ which has the {\tt t}-multiadditivity and weak
 bigness, and is gluing [\cite{Roslanowski/ShelahBK}, 2.1.7] for each pure candidate in $\PC^{\tt tt}_{\infty}(K_N,\Sigma_N)$.
Thus,  {\it Conclusion 4.8} of \cite{Roslanowski/Shelah13} holds for this example; that is,  Theorem \ref{thm.2.10RS}, with  each instance of 
$ \PC^{\tt tt}_{\infty}(K_1,\Sigma_1)$
replaced with
 $ \PC^{\tt tt}_{\infty}(K_N,\Sigma_N)$, holds.

We show  that this forcing itself forms a topological Ramsey space.
The pigeonhole principle 
\bf A.4 \rm will follow from the Hales-Jewett Theorem in \cite{Hales/Jewett63}.
This space  is  extremely similar to the space of infinite sequences of  variable words, which Carlson showed to be a topological Ramsey space  in \cite{Carlson87}, and which corresponds to  the ``loose" version.
We point out that  {\it Conclusion 4.8} of \cite{Roslanowski/Shelah13}
for this example does not 
follow from
$\PC^{\tt tt}_{\infty}(K_N,\Sigma_N)$
 being a topological Ramsey space, since members $s\in r_1[0,\bar{t}\,]$ may have $\val[s]$ of any cardinality.

\begin{thm}\label{prop.2.13tRs}
 $(\PC^{\tt tt}_{\infty}(K_N,\Sigma_N),\le,r)$ is a topological Ramsey space.
\end{thm}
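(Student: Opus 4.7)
The plan is to verify the Ellentuck axioms \textbf{A.1}--\textbf{A.4} together with closedness of $\PC^{\tt tt}_\infty(K_N,\Sigma_N)$ in the Tychonoff topology; the Abstract Ellentuck Theorem \ref{thm.AET} then delivers the result. As in the two previous examples of this section, \textbf{A.1}--\textbf{A.3} and closedness are immediate from the definitions of $\le$, $\le_{\mathrm{fin}}$, and $\Sigma_N$ (using the general discussion near the start of Section \ref{sec.tRs}), so the real work lies in the pigeonhole principle \textbf{A.4}.

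The conceptual key is that each $t\in K_N$ is essentially a variable word over the alphabet $N$ on the interval $[m^t_\dn,m^t_\up)$: positions in $X_t$ carry the prescribed values $\varphi_t$, while the nonempty complement $[m^t_\dn,m^t_\up)\setminus X_t$ acts as a single variable slot, giving $|\val[t]|=N$. Under $\Sigma_N$, gluing $t_0,\dots,t_n$ into $t$ corresponds exactly to the operation on variable words in which each block $[m^{t_l}_\dn,m^{t_l}_\up)$ is either preserved as a variable block (first clause of $\Sigma_N$) or replaced by one of its $N$ instantiations (second clause), subject to at least one block remaining variable to satisfy $X_t\subsetneq[m^t_\dn,m^t_\up)$. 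This places the analysis squarely inside the framework of Carlson's topological Ramsey space of infinite sequences of variable words from \cite{Carlson87}.

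For \textbf{A.4} on $r_k[k-1,\bar t\,]$ with $k\ge 1$, I would fix a coloring $c:r_k[k-1,\bar t\,]\to 2$ and observe that an element is determined by its last component $x_{k-1}\in\Sigma_N(t_{k-1},\dots,t_n)$ for some $n\ge k-1$, i.e.\ by a finite variable word built from the tail $(t_{k-1},t_k,\dots)$. Applying the infinitary Hales-Jewett theorem of \cite{Hales/Jewett63} (in the form used by Carlson) to this sequence produces an infinite sequence $(s'_0,s'_1,\dots)$ of pairwise-successive variable words, each $s'_j\in\Sigma_N(t_{n_j},\dots,t_{n_{j+1}-1})$ for a strictly increasing sequence $(n_j)$ with $n_0=k-1$, such that $c$ is constant on all $x_{k-1}\in\bigcup_{j<\om}\Sigma_N(s'_0,\dots,s'_j)$. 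Setting $\bar s=(t_0,\dots,t_{k-2},s'_0,s'_1,\dots)$ yields $\bar s\in[k-1,\bar t\,]$ with $c$ constant on $r_k[k-1,\bar s\,]$. To pass to the full \textbf{A.4} for $r_1[0,\bar t\,]$, I would run the standard fusion argument used in the proof of Theorem \ref{thm.ex.1.10RamseySpace}: derive colorings $c_k:\mathcal{AR}_{k+1}|\bar t\to 2$ from the given $c:r_1[0,\bar t\,]\to 2$ by selecting a canonical instantiation in each $\val[\cdot]$, apply the already-proved pigeonhole at each level, diagonalize, and then collapse the resulting diagonal refinement to a single $\bar v\le\bar t$ on which $c$ is constant on $r_1[0,\bar v\,]$.

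The main obstacle is the bookkeeping required to identify the sequence of variable words produced by Hales-Jewett with a genuine refinement $\bar s\le\bar t$ in $\PC^{\tt tt}_\infty(K_N,\Sigma_N)$: one must check that each glued block $s'_j$ arises as $\Sigma_N$ of \emph{consecutive} members of $\bar t$ (so that the tight condition $m^{s'_j}_\up=m^{s'_{j+1}}_\dn$ and $m^{s'_j}_\up=m^{t_{n_{j+1}-1}}_\up$ holds), that at least one block remains variable inside each $s'_j$ (so the FP condition $X_{s'_j}\subsetneq[m^{s'_j}_\dn,m^{s'_j}_\up)$ is preserved), and that the trunkless (pure candidate) condition is maintained. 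Once this identification is verified, \textbf{A.4} is established in full, and the Abstract Ellentuck Theorem completes the proof.
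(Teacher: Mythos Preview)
Your approach matches the paper's: translate creatures to variable words, apply the (infinitary) Hales--Jewett theorem to handle $r_k[k-1,\bar t\,]$ for $k\ge 2$, and then run the fusion argument of Claim~\ref{claim.b}/\ref{claim.d} to obtain \textbf{A.4} for $r_1[0,\bar t\,]$. The only imprecision is the phrase ``selecting a canonical instantiation in each $\val[\cdot]$'' in your fusion step: unlike Example~2.10 (where the $\min$ operation was needed because first entries must satisfy $|A|=1$), here every creature $x_k$ is already a legitimate member of $r_1[0,\bar t\,]$, so the paper simply sets $c_k(x_0,\dots,x_k)=c(x_k)$ with no selection from $\val$ required.
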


\begin{proof}
Let  $k\ge 2$ and $\bar{t}\in  \PC^{\tt tt}_{\infty}(K_N,\Sigma_N)$ be given.
There is a one-to-one correspondence $\sigma$ between 
$ r_k[k-1,\bar{t}\,]$ and the set of finite variable words on alphabet $N$:
For  $(t_0,\dots,t_{k-2},s)\in r_k[k-1,\bar{t}\,]$,
let $\sigma(s)$ denote 
the variable word 
$(l_{k-1},\dots,l_m)$ 
where 
$m\ge k-1$ is such that 
 $s
\in\Sigma_N(t_{k-1},\dots t_m)$
  and 
for each $i\in[k-1,m]$,  $l_i\in N$ if and only if $\vp_x\re [m^{t_i}_{\dn},m^{t_i}_{\up})\setminus X_{t_i}\equiv l_i$;
and  $l_i=v$  if and only if $X_s\cap [m^{t_i}_{\dn},m^{t_i}_{\up})=X_{t_i}$.

Given a coloring $c: r_k[k-1,\bar{t}\,]\ra 2$,  let $c'$ color the collection of all variable words on alphabet $N$
by $c'(\sigma(s))=c(t_0,\dots,t_{k-2},s)$.
By the Hales-Jewett Theorem, there is an infinite sequence of variable words $(x_i)_{i<\om}$ such that $c'$ is constant on all variable words of the form $x_{i_0}[\lambda_0]^{\frown}\dots^{\frown}x_{i_n}[\lambda_n]$, where each $\lambda_j\in N\cup\{v\}$ and at least one $\lambda_j=v$.
For each $i\ge k-1$, let $l(i)=|x_i|$, the length of the word $x_i$.
Let  $m_0=k-1+l(k-1)$, and 
given $i<\om$ and $m_i$,
 let $m_{i+1}=m_i+l(i)$.
Let $(l^i_0,\dots, l^i_{l(i)-1})$ denote $x_i$.
Define $s_{k-1}$ to be the member  of $\Sigma_N(t_{k-1},\dots,t_{m_0-1})$ such 
 that $\sigma(s_{k-1})=x_0$,
and in general, for $i\ge 1$ define $s_{k-1+i}$ to be the member of 
$\Sigma_N(t_{m_{i-1}},\dots,  t_{m_i-1})$
such that $\sigma(s_{k-1+i})=x_i$.
Letting $\bar{s}=r_{k-1}(\bar{t})^{\frown} (s_{k-1},s_k,\dots)$,
it is routine to check that $c$ is monochromatic on $r_k[k-1,\bar{s}]$.
Hence, \bf A.4 \rm holds for $k\ge 2$.

A fusion argument identical to the proof of Claim \ref{claim.b} yields the following.

\begin{claim}\label{claim.d}
Given $\bar{t}\in \PC^{\tt tt}_{\infty}(K_N,\Sigma_N)$ and colorings $c_k:
\mathcal{AR}_{k+1}|\bar{t}\ra l_k$,
 for some $l_k\ge 1$,
there is an $\bar{s}\le\bar{t}$ in $\PC^{\tt tt}_{\infty}(K_N,\Sigma_N)$ such that $m^{s_0}_{\dn}=m^{t_0}_{\dn}$,
$c_0$ is constant on $r_1[0,\bar{s}]\cap\Sigma_N(\bar{s})$, and for each $k\ge 1$,
the coloring $c_k$ on $r_{k+1}[k,\bar{s}]$ is constant.
\end{claim}

Finally, to prove \bf A.4 \rm for $\mathcal{AR}_1$,
 let $c:r_1[0,\bar{t}\,]\ra 2$ be given.
For each $k<\om$,
define a coloring $c_k:\mathcal{AR}_{k+1}|\bar{t}\ra 2$ by
$c_k(x_0,\dots, x_{k})=c(x_{k})$.
Take $\bar{s}\le \bar{t}$ satisfying Claim \ref{claim.d}.
There is a strictly increasing sequence $(k_j)_{j<\om}$
 such that
 the color of $c_{k_j}$ on
$ r_{k_j+1}[k_j,\bar{s}\,]$
 is the same
for all $j<\om$.
Take   $\bar{v}\le \bar{s}$
satisfying 
that  for each $j<\om$,
$m^{v_j}_{\dn}=m^{s_{k_j}}_{\dn}$ and $m^{v_j}_{\up}=m^{s_{k_{j+1}-1}}_{\up}$,
and $v_j\in\Sigma_N(s_{k_j},\dots,s_{k_{j+1}-1})$.
Then  $c$ is constant on $r_1[0,\bar{v}]$.

Thus, \bf A.4 \rm holds, and  hence the Theorem holds.
\end{proof}

\section{Remarks and Further Lines of Inquiry}

Whenever a forcing contains a topological Ramsey space as a dense subset,  this has implications for the properties of the  generic extension  and provides as well   Ramsey-theoretic techniques for streamlining proofs.
Although this note only 
showed that the pure candidates for  three examples of  creature forcings
contain dense subsets forming topological Ramsey spaces,
 the work here points to and lays some groundwork for several 
natural   lines of inquiry.

One obvious  line of exploration is to 
develop stronger versions and other variants of Theorem \ref{thm.newandimportant}
to obtain the pigeonhole principle for the pure candidates for other creating pairs, in particular  for Example 2.12 in \cite{Roslanowski/Shelah13}.  
Another is to develop this theory for the loose candidates, as we only considered tight types here.
A deeper  line of inquiry is  to determine  the implications  that the existence of a topological Ramsey space dense in a collection of pure candidates for a creating pair has for  the
forcing notion (with stems) generated by that creating pair.

The topological Ramsey spaces considered here force ultrafilters on base set $K$, a set of creatures, which  in turn generate ultrafilters on  a countable set of finite functions $\mathcal{F}_{\mathbf{H}}$.
The work here
yields   partition theorems of  Ros{\l}anowski and Shelah  in \cite{Roslanowski/Shelah13} for two of  examples
considered
 in Section \ref{sec.tRs}.
It will be interesting to see how their 
 Glazer methods 
interact with the product tree Ramsey methods more abstractly.
The hope is that this article has piqued the reader's interest to  investigate further the connections between creature forcings and topological Ramsey spaces, as such investigations  will likely 
will lead to new Ramsey-type  theorems and  new topological Ramsey spaces, while adding to the collection of available techniques and streamlining approaches to at least some genres of the myriad of  creature forcings.


\bibliographystyle{amsplain}
\bibliography{references}

\end{document}